\title{Embeddings of self-similar ultrametric Cantor sets}
\author{Antoine Julien, Jean Savinien \\
 {\small Institut Camille Jordan, Universit\'e Lyon I, France}
 }
 \date{ }
\theoremstyle{plain}
\newtheorem{theo}{Theorem}[section]
\newtheorem{lemma}[theo]{Lemma}
\newtheorem{coro}[theo]{Corollary}
\newtheorem{hypo}[theo]{Hypothesis}
\theoremstyle{definition}
\newtheorem{defini}[theo]{Definition}
\newtheorem{exam}[theo]{Example}
\theoremstyle{remark}
\newtheorem{rem}[theo]{Remark}
\newtheoremstyle{citing}
  {\topsep}
  {\topsep}
  {\itshape}
  {}
  {\bfseries}
  {.}
  {.5em}
  {\thmnote{#3}}
\theoremstyle{citing}
\newcommand{\Aa}{{\mathcal A}}
\newcommand{\Bb}{{\mathcal B}}
\newcommand{\Dd}{{\mathcal D}}
\newcommand{\Ee}{{\mathcal E}}
\newcommand{\Vv}{{\mathcal V}}
\newcommand{\CM}{{\mathbb C}}
\newcommand{\NM}{{\mathbb N}}
\newcommand{\RM}{{\mathbb R}}
\newcommand{\RMbar}{\overline{\mathbb R}}
\newcommand{\ZM}{{\mathbb Z}}
\newcommand{\TR}{{\rm Tr\,}}                       
\newcommand{\Cs}{$C^{\ast}$-algebra }              
\newcommand{\CsS}{$C^{\ast}$-algebras}             
\newcommand{\Sp}{\mbox{\rm Sp}}                    
\newcommand{\diam}{\mbox{\rm diam}}                
\newcommand{\pf}{\Lambda}         
\newcommand{\ext}{\text{\rm ext}_1}    
\newcommand{\eps}{\varepsilon}
\newcommand{\dmi}{\delta_{\min}}
\newcommand{\dma}{\delta_{\max}}
\newcommand{\mdix}{\mu}
\renewcommand{\phi}{\varphi}
\begin{document}

\maketitle

\begin{abstract}
\noindent
We study self-similar ultrametric Cantor sets arising from stationary Bratteli diagrams.
We prove that such a Cantor set $C$ is bi-Lipschitz embeddable in $\RM^{[\dim_H(C)]+1}$, where $[\dim_H(C)]$ denotes the integer part of its Hausdorff dimension.
We compute this Hausdorff dimension explicitly and show that it is the abscissa of convergence of a {\it zeta}-function associated with a natural nerve of coverings of $C$ (given by the Bratteli diagram).
As a corollary we prove that the transversal of a (primitive) substitution tiling of $\RM^d$ is bi-Lipschitz embeddable in $\RM^{d+1}$.

We also show that $C$ is bi-H\"older embeddable in the real line.
The image of $C$ in $\RM$ turns out to be the $\omega$-spectrum (the limit points of the set of eigenvalues) of a Laplacian on $C$ introduced by Pearson-Bellissard {\it via} noncommutative geometry.
\end{abstract}


\tableofcontents


\section{Introduction and summary of the results}
\label{cantoremb10.sect-intro}

In this article, we study self-similar ultrametric Cantor sets, and their embeddings in Euclidean spaces. 
A prototype of such a space, and our motivating example, is the canonical transversal of a substitution tiling space.
The main motivation for this work was to derive embedding results for tilings tranversals.
We refer the reader to \cite{AP98,Rob04,Frank08}, as well as to \cite{JS10} Section 2, for basic notions about substitution tilings and tiling spaces.
Bratteli diagrams are important tools for the study of substitution tilings, and we use some of their techniques here.


Ultrametrics are very natural for tilings transversals as well as totally disconnected spaces in general.
The condition for a metric to be a ultrametric, see equation \eqref{cantoremb10.eq-ultra}, is a very strong requirement: for instance if two ultrametric balls intersect then one contains the other.
Our techniques, and results, are not transferable {\it as is} to the study of general metric Cantor sets.
The proofs of our embedding theorems are fairly elementary in comparison to what is known for metric spaces.
The reader can compare for example with Assouad theorem for metric spaces with the doubling property (see \cite{BS07} section 8.1).

%

\paragraph{Ultrametric Cantor sets and Bratteli diagrams}
A {\em ultrametric Cantor set} $(C,\rho)$ is a compact, Hausdorff, perfect (no isolated points), and totally disconnected space $C$, equipped with a metric $\rho$ which satisfies a stronger form of triangle inequality, namely
\begin{equation}
\label{cantoremb10.eq-ultra}
\forall z \in C\,, \quad \rho (x,y) \leq \max \{ \rho(x,z), \rho(z,y) \}
\end{equation}
By Michon's theorem \cite{Mich85} there is an isometric equivalence between ultrametric Cantor sets and weighted Cantorian trees.
Given such a Cantor set, the ultrametric allows to define a nerve of partitions by clopen sets (closed and open sets) which gives rise to a Cantorian tree: its vertices represent the clopens and the weight encode their diameters.
And conversely, any such weighted Cantorian tree defines a ultrametric Cantor set.

The authors showed in \cite{JS10} that one can equivalently use {\em Bratteli diagrams} (Definition~\ref{cantoremb10.def-bratteli}) instead of Cantorian trees: paths in the diagram (like vertices of the graph) encode the clopen sets.
The formalism with Bratteli diagrams turns out to be very handy for {\em self-similar} Cantors which correspond to {\em stationary} Bratteli diagrams.
A self-similar Cantor is then viewed as an iterated function system (IFS) directed by the graph given by the Bratteli diagram (see Remark~\ref{cantoremb10.rem-reg}).


\paragraph{The {\it zeta}-function}
Consider a self-similar ultrametric Cantor set $(C,\rho)$.
Its Bratteli diagram encodes a nerve of partitions by clopens $(\Pi_n)_{n\in \NM}$.
For each $n$, $\Pi_n$ is viewed as a set of paths $\gamma$ of length $n$ in the diagram, whose corresponding clopen sets $[\gamma]$ partition $C$.
There is a natural {\it zeta}-function associated with $(C,\rho)$ (Definition~\ref{cantoremb10.def-zeta}), namely the complex power series:
\[
\zeta(s) = \sum_{n=0}^\infty \sum_{\gamma \in \Pi_n} \diam_\rho ([\gamma])^{s} \,.
\]
When it exists, its {\em abscissa of convergence} is the real $s_0>0$ such that $\zeta(s)$ is holomorphic in the half-plane \(\Re(s)>s_0\), and singular at  $s_0$.
It turns out that $\zeta$ is exactly the {\it zeta}-function of the spectral triple that Pearson and Bellissard built for ultrametric Cantor set \cite{PB09,JS10}.
In noncommutative geometry (NCG) \cite{Co94}, this abscissa of convergence is interpreted as the dimension of the noncommutative space (here the \Cs of continuous functions on $C$ with the sup norm).
In Lemma~\ref{cantoremb10.lem-s0}, we show that $s_0<+\infty$, and identify $s_0$ explicitly in terms of the contractions of the IFS.
We further prove the following.

\vspace{.1cm}
\noindent {\bf Theorem~\ref{cantoremb10.thm-hausdorff}}
{\em 
The Hausdorff dimension of a self-similar ultrametric Cantor set is equal to the abscissa of convergence of its {\it zeta}-function.
}
\vspace{.1cm}

As a corollary of this and previous results in \cite{JS10} we compute the Hausdorff dimension of the transversal of a (primitive) substitution tiling of $\RM^d$.

\vspace{.1cm}
\noindent {\bf Theorem~\ref{cantoremb10.thm-zeta}}
{\em 
Consider a primitive substitution tiling of $\RM^d$, with canonical transversal $\Xi$.
The abscissa of convergence $s_0$ of the {\it zeta}-function of $\Xi$ is equal to its Hausdorff dimension \(\dim_H(\Xi)\), and moreover one has
\[
s_0 =\dim_H(\Xi) = d\,.
\]
}

\paragraph{Embeddings and the Pearson-Bellissard's Laplacians}

We prove several embedding results for self-similar ultrametric Cantor sets: Lipschitz embedding in $\RM^n$, and H\"older embedding in $\RM$.
Our main result is the following.

\vspace{.1cm}
\noindent {\bf Theorem~\ref{cantoremb10.thm-hoelder2}}
{\em
Let \((C,\rho)\) be a self-similar ultrametric Cantor set.
There exists a bi-Lipschitz embedding
\[
C \hookrightarrow \RM^{[\dim_H(C)]+1} \,,
\]
where $\dim_H(C)$ is the Hausdorff dimension of $C$, and $[\dim_H(C)]$ denotes its integer part.
}
\vspace{.1cm}

And as a corollary of this and Theorem~\ref{cantoremb10.thm-zeta}, we prove an embedding of tilings transversals.

\vspace{.1cm}
\noindent {\bf Theorem~\ref{cantoremb10.thm-bilipXi}}
{\em
The transversal of a substitution tiling of $\RM^d$ is bi-Lipschitz embeddable in $\RM^{d+1}$.
}
\vspace{.1cm}

We also prove a H\"older embedding on the real line.

\vspace{.1cm}
\noindent {\bf Theorem~\ref{cantoremb10.thm-Hoelder}}
{\em
A self-similar ultrametric Cantor set is bi-H\"older embeddable in the real line.
}
\vspace{.1cm}

This is interesting in the case of the transversal $\Xi$ of a substitution tiling space of $\RM^d$.
The Pearson-Bellissard spectral triple allows to define a one-parameter family $(\Delta_s)_{s\in\RM}$ of Laplace-Beltrami-like operators on $\Xi$ (as ``squares'' of the Dirac operator).
Those operators were introduced in \cite{PB09} and studied in details in \cite{JS10}.
Under some techniqueal but fairly general assumptions (Remark~\ref{cantoremb10.rem-tech}) we prove that their spectra ``contains'' the transversal as follows.

\vspace{.1cm}
\noindent {\bf Corollary~\ref{cantoremb10.cor-holderdim}}
{\em
For all $s$ greater than \(2(d + 1)\), the $\omega$-spectrum of $\Delta_s$ (the limit points of its pure point spectrum) is bi-H\"older homeomorphic to $\Xi$.
}

\section{Self-similar Cantor sets and stationary Bratteli diagrams}
\label{cantoremb10.sect-bratteli}

Bratteli diagrams were first introduced in \cite{Bra72} to classify $AF$ \CsS.
They provide also a handy formalism for substitutions \cite{For97,DHS99,JS10}, and tilings in general \cite{BJS10}.

\begin{defini}
\label{cantoremb10.def-bratteli}
A {\em Bratteli diagram} is an infinite oriented graph \(\Bb = (E,V)\) where the sets of edges $E$ and vertices $V$ are given by the disjoint unions
\[
E= \coprod_{n\in \NM} E_n \,, \qquad V=\coprod_{n\in\NM} V_n\,,
\]
where $E_n$ and $V_n$ are finite sets, \(V_0 = \{ \circ\}\) is called the root of the diagram, and $E_0$ is isomorphic to  $V_1$.
The range and source maps are defined as \(r: E_n \rightarrow V_{n}\), \(s: E_n \rightarrow V_{n-1}\) .
The integer $n$ is called the {\em depth} in the diagram.

A {\em path} is a sequence \(\gamma=(e_0, e_1, e_2, \ldots) \) of composable edges: \(e_i \in E_i\), \(r(e_i)=s(e_{i+1})\).
One lets $\Pi_n$ denote the sets of paths down to depth $n$, and for $\gamma \in \Pi_n$ one call $|\gamma|=n$  its {\em length}.
Also, one sets $\Pi=\cup_n \Pi_n$ for the set of finite paths, and $\Pi_\infty$ for the set of infinite paths. 
For $\gamma \in \Pi_m$, $m\in \NM\cup\{ +\infty\}$, and $1\le n < m$, one sets $\gamma_{[0,n]}$ for the restriction of $\gamma$ to $\Pi_n$.
One extends the range map to $\Pi$ as follows: if \(\gamma=(e_0, e_1, \ldots e_n)\in \Pi_n\) then \(r(\gamma) = r(e_n)\).

The diagram is {\em stationary} if the sets $E_n$ and $V_n$ are respectively pairwise isomorphic for all $n\ge 1$.
One then writes \(E_n \cong \Ee\), \(V_n \cong \Vv\), $n\ge 1$.
In this case, the diagram is entirely determine by its {\em adjacency matrix}
\begin{equation}
\label{cantoremb10.eq-adjmatrix}
A_{vv'} = \# \{ e \in \Ee\, : \, s(e) = v\,, r(e) = v' \}\,, \ v,v' \in \Vv\,.
\end{equation}
\end{defini}

With the product topology of the $E_n$, $n\in \NM$, the set $\Pi_\infty$ is compact and totally disconnected.
A base of clopen sets (closed and open sets) for this topology is given by the {\em cylinders}
\begin{equation}
\label{cantoremb10.eq-cylinder}
[\gamma] = \{ x\in \Pi_\infty \; : \; x_{[0,n]} = \gamma\}\,, \quad \gamma \in \Pi_n, \, n\in \NM\,.
\end{equation}
If $\Pi_\infty$ has no isolated point, then it is a Cantor set.
This is the case whenever the following condition is satisfied.
\begin{hypo}
\label{cantoremb10.hyp-cantor}
For any $n\in\NM$ and any $v\in V_n$, there exists $m> n$ such that for any $v'\in V_m$ there is a path from $v$ to $v'$.
\end{hypo}

\begin{defini}
\label{cantoremb10.def-weight}
A {\em weight function} on a Bratteli diagram \(\Bb=(E,V)\), is a function  \(w : \Pi \rightarrow (0, +\infty)\) which satisfies
\begin{enumerate}[(i)]

\item \(w(\gamma) \le w(\gamma_{[0,m]})\) for any $\gamma \in \Pi_n$ and  any $m< n$,

\item \(\sup \{ w(\gamma) \; : \; \gamma \in \Pi_n\} \rightarrow 0\) as $n\rightarrow +\infty$.
\end{enumerate}
A {\em weighted Bratteli diagram}, is a Bratteli diagram equipped with a  weight function.
\end{defini}


A weight function $w$ on a Bratteli diagram \(\Bb=(E,V)\) defines a ultrametric $\rho_w$ on $\Pi_\infty$ as follows
\begin{equation}
\label{cantoremb10.eq-diamcyl}
\diam_{w}([\gamma]) = w(\gamma) \,, \qquad 
\rho_w(x,y) = \left\{
\begin{array}{ccc}
w(x\wedge y) & \textrm{if} & x \neq y \\
0 & \textrm{if} & x=y
\end{array} \right.
\end{equation}
where $x\wedge y$ is the longest common prefix of $x$ and $y$.

In this manner any weighted Bratteli diagram satisfying Hypothesis~\ref{cantoremb10.hyp-cantor}. defines a ultrametric Cantor set.
The converse is a result of Michon \cite{Mich85} (rephrased here in terms of Bratteli diagrams).
\begin{theo}
\label{cantoremb10.thm-michon}
For any ultrametric Cantor set $C$, there exists a weighted Bratteli diagram satisfying Hypothesis~\ref{cantoremb10.hyp-cantor}, whose set of infinite path is isometric to $C$.
\end{theo}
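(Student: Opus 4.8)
The plan is to reconstruct a weighted Bratteli diagram directly from the metric geometry of $(C,\rho)$, exploiting the fact that in an ultrametric space the closed balls behave exactly like the blocks of a refining sequence of partitions. First I would record the elementary consequences of \eqref{cantoremb10.eq-ultra}: every ball is clopen, any two balls are either nested or disjoint, and the smallest ball containing two distinct points $x,y$ has diameter exactly $\rho(x,y)$ — indeed every point of that ball lies within $\rho(x,y)$ of $x$ by the strong triangle inequality, while the pair $x,y$ realizes that diameter. Compactness then shows that for each $\delta>0$ there are only finitely many balls of diameter $\ge\delta$, and moreover that two points in distinct balls of radius $\delta$ are at a distance determined by their blocks, so the set $D$ of realized distances has no accumulation point in $(0,+\infty)$ and can be exhausted by a sequence decreasing to $0$. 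Perfectness gives that every ball with more than one point splits into at least two proper sub-balls. From this I would extract a refining sequence of finite clopen partitions $P_0=\{C\}\succ P_1\succ P_2\succ\cdots$, by balls of radius $r_n$ with $r_n$ running through the values of $D$, so that the mesh tends to $0$ and two points are first separated precisely at the level where their distance is realized.

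Next I would encode $(P_n)_n$ as a weighted Bratteli diagram $\Bb=(E,V)$: take the blocks of $P_n$ as the data at depth $n$, record an edge for each inclusion $B'\subset B$ of a block $B'\in P_{n+1}$ into its unique parent $B\in P_n$, and assign to the path $\gamma$ ending at a block $B$ the weight $w(\gamma)=\diam_\rho(B)$. Axiom (i) of Definition~\ref{cantoremb10.def-weight} follows from nesting and axiom (ii) from mesh $\to0$, so $w$ is a genuine weight function in the sense of \eqref{cantoremb10.eq-diamcyl}.

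The heart of the argument is the isometry. I would define $\Phi:\Pi_\infty\to C$ by sending an infinite path, i.e.\ a nested sequence $B_0\supset B_1\supset\cdots$ of blocks, to the unique point of $\bigcap_n B_n$; this intersection is a single point by compactness together with mesh $\to0$, while $\Phi$ is surjective because the $P_n$ cover $C$ and injective because distinct points are eventually separated. The cylinders \eqref{cantoremb10.eq-cylinder} correspond to the blocks, so $\Phi$ is a homeomorphism. The key computation is that for $x\ne y$ the longest common prefix $x\wedge y$ is the block at the level where the two paths diverge, which by the choice of partitions is the smallest ball containing $\Phi(x)$ and $\Phi(y)$, hence has diameter $\rho(\Phi(x),\Phi(y))$; comparing with \eqref{cantoremb10.eq-diamcyl} this yields $\rho_w(x,y)=\rho(\Phi(x),\Phi(y))$, so $\Phi$ is an isometry.

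I expect the main obstacle to be arranging the diagram so that Hypothesis~\ref{cantoremb10.hyp-cantor} holds in its stated connectivity form, rather than only the weaker ``no isolated points'' that perfectness supplies for free. In the naive inclusion diagram the descendants of a block stay inside that block, so one cannot reach all deeper vertices from a fixed one; the remedy is to pass to a quotient in which blocks of the same \emph{type} are identified, so that a single vertex acquires many representatives whose descendants jointly exhaust every deeper vertex. I would therefore replace the plain tree by its quotient under a consistent typing (a bisimulation) with finitely many types per level, chosen coarse enough to connect any vertex at level $n$ to every vertex at some deeper level $m$ while leaving the path space and the weights — hence $\Phi$ — unchanged. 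Verifying that such a telescoping typing can always be chosen, and that it preserves the isometry, is where the real work lies.
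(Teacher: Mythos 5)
The paper offers no proof of this statement to compare against: it is quoted as Michon's theorem with a reference to \cite{Mich85}, merely ``rephrased in terms of Bratteli diagrams''. Judged on its own, the core of your construction is the standard (and correct) proof of Michon's theorem: in a compact ultrametric space the set of realized distances accumulates only at $0$, closed balls of a fixed radius partition the space, the tree of ball partitions (one level per realized distance) is a Bratteli diagram in which every vertex has a unique incoming edge, $w(\gamma)=\diam_\rho$ of the corresponding ball satisfies Definition~\ref{cantoremb10.def-weight}, and $\Phi$ is an isometry because the last common block of two paths is the smallest ball containing the two image points, whose diameter is exactly their distance. All of that is sound.

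The gap is precisely the one you flag at the end, and your proposed remedy does not close it. Identifying vertices is only harmless for the path space if it is done by subtree-isomorphism type (otherwise new composable continuations appear), but such a type quotient need not satisfy Hypothesis~\ref{cantoremb10.hyp-cantor}: take $C=C_1\sqcup C_2$ with $d(C_1,C_2)=1$, where $C_1=\{0,1\}^{\NM}$ and $C_2=\{0,1,2\}^{\NM}$, both with $\rho(x,y)=2^{-|x\wedge y|-1}$. In the ball-partition tree every vertex below $C_1$ branches into two children at each level and every vertex below $C_2$ into three, so no subtree on one side is isomorphic to one on the other, no identifications across the two sides are possible, and from a vertex below $C_1$ one can never reach the vertices below $C_2$. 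The correct repair is not to hunt for coincidental type matches but to \emph{homogenize the combinatorics first}: replace the ball partitions by a refining sequence of clopen partitions in which every piece splits into exactly two pieces at each level. This is always possible: a piece $P$ with $\diam_\rho(P)=d>0$ breaks into $k\ge 2$ classes under the relation $\rho<d$, any two of which lie at mutual distance exactly $d$; split the set of classes into two nonempty groups, iterate until each piece is a single class (of strictly smaller diameter), then recurse inside each class. Every piece is at each stage a union of such classes, so the property ``the smallest common piece containing $x\ne y$ has diameter $\rho(x,y)$'' is preserved, and the mesh still tends to $0$ since realized distances accumulate only at $0$. The resulting diagram has one vertex and two edges per level, hence satisfies Hypothesis~\ref{cantoremb10.hyp-cantor} trivially, and the weight $w(\gamma)=\diam_\rho$ of the corresponding piece again makes $\Phi$ an isometry. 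With that substitution your argument becomes a complete proof.
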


One has the obvious following Lemma.
\begin{lemma}
\label{cantoremb10.lem-bilipsscantor}
Let $w,w'$ be two weights on a Bratteli diagram, and assume that there exists constants \(0<c_1<c_2\) such that 
\(c_1 w'(\gamma) \le w(\gamma) \le c_2 w'(\gamma)\),
for all \(\gamma \in \Pi\).
Then the ultrametrics associated with $w$ and $w'$ are bi-Lipschitz equivalent, namely one has: \(c_1 \rho_{w'} \le \rho_w \le c_2 \rho_{w'}\).
\end{lemma}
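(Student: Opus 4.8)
The plan is to reduce the bi-Lipschitz comparison of the two ultrametrics to the pointwise comparison of the weights assumed in the hypothesis, evaluated at a single well-chosen finite path. The key observation is that both $\rho_w$ and $\rho_{w'}$ are computed from the \emph{same} combinatorial datum, namely the longest common prefix $x\wedge y$ of the two points; the passage from distances to weights therefore decouples completely from the choice of weight function.

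Concretely, I would argue as follows. Fix $x,y\in\Pi_\infty$. If $x=y$, then $\rho_w(x,y)=\rho_{w'}(x,y)=0$ by \eqref{cantoremb10.eq-diamcyl}, and the inequalities $c_1\rho_{w'}\le\rho_w\le c_2\rho_{w'}$ hold trivially. If $x\neq y$, let $\gamma=x\wedge y$ be their longest common prefix; this is a finite path, so $\gamma\in\Pi$ and the hypothesis applies to it. By \eqref{cantoremb10.eq-diamcyl} one has $\rho_w(x,y)=w(\gamma)$ and $\rho_{w'}(x,y)=w'(\gamma)$, whence the assumed chain $c_1 w'(\gamma)\le w(\gamma)\le c_2 w'(\gamma)$ reads exactly as $c_1\rho_{w'}(x,y)\le\rho_w(x,y)\le c_2\rho_{w'}(x,y)$. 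Since $x,y$ were arbitrary, this is the claimed bi-Lipschitz equivalence of the two metrics.

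There is no real obstacle here, and indeed this is why the lemma is described as obvious: the statement is immediate once one notes that the only finite path entering the computation of the distance between two given points is their longest common prefix, so the global weight comparison localizes to a single evaluation of the hypothesis. The only thing worth recording is the trivial bookkeeping that $x\wedge y\in\Pi$ for distinct points of $\Pi_\infty$, together with the separate treatment of the diagonal $x=y$ where both distances vanish.
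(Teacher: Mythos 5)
Your proposal is correct and is exactly the argument the paper has in mind: the paper states the lemma as ``obvious'' and omits the proof entirely, precisely because, as you note, $\rho_w(x,y)=w(x\wedge y)$ and $\rho_{w'}(x,y)=w'(x\wedge y)$ are evaluated at the same finite path, so the weight comparison transfers verbatim to the metrics. Nothing is missing; your handling of the diagonal case $x=y$ is the only bookkeeping required.
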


Michon's theorem and Lemma~\ref{cantoremb10.lem-bilipsscantor} motivate the following definition.
\begin{defini}
\label{cantoremb10.def-sscantor}
A self-similar ultrametric Cantor set is the set \(\Pi_\infty\) of infinite paths in a stationary
Bratteli diagram with ultrametric $\rho$ given, for some \(\alpha \in (0,1)\) and 
\(a_\gamma \in (0,+\infty), \, \gamma\in \Pi\), by: 
\[
\rho(x,y) = a_\gamma \alpha^{|x\wedge y|}\,, \qquad \text{and the condition} \qquad
0 < \inf_{\gamma \in \Pi} a_\gamma \le \sup_{\gamma \in \Pi} a_\gamma < + \infty \,.
\]
A self-similar ultrametric Cantor set is called {\em regular} if \(a_\gamma = 1\)
for all \(\gamma \in \Pi\).
\end{defini}

\begin{rem}
\label{cantoremb10.rem-reg}
\begin{enumerate}[(i)]

\item In view of Lemma \ref{cantoremb10.lem-bilipsscantor}, any self-similar ultrametric Cantor set is
bi-Lipschitz equivalent to a regular one.
If one writes \(a= \inf_\gamma a_\gamma\) and \(a'=\sup_\gamma a_\gamma\), then the two constants 
\(c_1, c_2\), in Lemma \ref{cantoremb10.lem-bilipsscantor} can be taken to be $a/a'$ and $a'/a$ respectively.

\item A regular self-similar Cantor set as defined above can be seen as the invariant set of a graph directed IFS.
The maps are in one-to-one correspondence with the edges in $\Ee$, and have all contraction factor $\alpha$.
The directing graph is the graph of adjacencies of $\Ee$, and its paths correspond to $\Pi$.
\end{enumerate}

\end{rem}

\begin{exam}
\label{cantoremb10.ex-substitution}
Given a substitution on a finite alphabet $\Aa$ (that is a map which to each letter of $\Aa$ associates a finite word over $\Aa$), define the \emph{Abelianization matrix}
\begin{equation}
\label{cantoremb10.eq-abelmatrix}
A_{ij} =  \textrm{ number of occurences of $j$ is the substitution of $i$.}
\end{equation}
Then one can associate stationary Bratteli diagram with $\Vv \cong \Aa$ and adjacency matrix equal to $A$.
Under some conditions like primitivity (see below) and border forcing, see~\cite{Kel95}, it turns out that there is a canonical map between $\Pi_\infty$ and the subshift of $\Aa^\ZM$ associated with the substitution.

A square matrix with non-negative entries is {\em primitive} if some power $A^n$ has positive entries.
When $A$ is primitive, then \(\Pi_\infty\) satisfies Hypothesis~\ref{cantoremb10.hyp-cantor} and is thus a Cantor set, and the the subshift of $\Aa^\ZM$ associated with the substitution is minimal
Also, $A$ has a Perron-Frobenius eigenvalue $\pf$: it is real, simple, and of modulus greater than those of the other eigenvalues.
Here $\pf$ is also greater than one, and is the dilation factor of the substitution.
If one normalizes the Perron-Frobenius eigenvector, such that its coordinates add up to 1, the latter are the frequencies of the letters: \(\nu_i, i\in \Vv\).
A natural choice of ultrametric is then given by the following weights:
\begin{equation}
\label{cantoremb10.eq-metricsimbsub}
w(\gamma) = \nu_{r(\gamma)} \Lambda^{-|\gamma|}\,, \quad \quad  \text{\it i.e.\ } \quad \quad
a_\gamma = \nu_{r(\gamma)} \,, \quad \alpha = \pf^{-1}\,.
\end{equation}
In the case of substitution tilings of $\RM^d$, in order to recover the metric generally used on the canonical transversal $\Xi$ of the tiling space, one can take the following
\begin{equation}
\label{cantoremb10.eq-metricsubRd}
a_\gamma = (\nu_{r(\gamma)})^{1/d} \,, \quad \alpha = \pf^{-1/d}\,.
\end{equation}
With such ultrametric, \(\Pi_\infty\) is bi-Lipschitz homeomorphic to $\Xi$ (see \cite{JS10} section 2).
\end{exam}


Given a Bratteli diagram $\Bb=(E,V)$, and an integer $k\ge 1$, one defines its $k$-th {\em telescoping} $\Bb^{(k)}$ as follows: 
\[
V^{(k)}_n \cong V_{kn} \qquad 
E^{(k)}_n \cong \; \text{\rm set of paths from } \; V_{(n-1)k-1} \ \text{\rm down to } \;V_{nk} \,.
\]
If the ultrametric of $\rho$ on \(\Pi_\infty\) has parameter $\alpha$, then one sees that the inherited ultrametric $\rho^{(k)}$ on \(\Pi_\infty^{(k)}\) has parameter $\alpha^k$.
Also if $A$ is the adjacency matrix of $\Bb$, then $A^k$ is that of $\Bb^k$.

\begin{lemma}
\label{cantoremb10.lem-telescoping}
Let $\Bb=(E,V)$ be a stationary Bratteli diagram, and \((\Pi_\infty,\rho)\) its associated self-similar Cantor set, with regular ultrametric $\rho$ of parameter $\alpha$.
Then \((\Pi_\infty,\rho)\) and \((\Pi_\infty^{(k)},\rho^{(k)})\) are bi-Lipschitz equivalent for all $k\ge 1$, namely one has:
\[
 \alpha^k \rho^{(k)} < \rho \le \rho^{(k)} \,.
\]
\end{lemma}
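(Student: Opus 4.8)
The plan is to exploit the canonical identification of the two path spaces and then reduce the whole statement to an elementary inequality for floor functions.

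First I would make explicit the canonical homeomorphism $\Phi \colon \Pi_\infty \to \Pi_\infty^{(k)}$ that sends an infinite path $x=(e_0,e_1,e_2,\dots)$ to the path obtained by grouping its edges into consecutive blocks of length $k$, the $j$-th block being $(e_{jk},\dots,e_{(j+1)k-1})$. By the very definition of the telescoping $\Bb^{(k)}$ this is a bijection, and it is a homeomorphism for the respective product topologies. It therefore suffices to compare the two ultrametrics under this identification, viewing both $\rho$ and $\rho^{(k)}$ as metrics on $\Pi_\infty$.

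Next, fix distinct $x,y\in\Pi_\infty$ and set $n=|x\wedge y|$, so that $\rho(x,y)=\alpha^n$ since $\rho$ is regular. The key observation is the computation of the common prefix length $|\Phi x\wedge\Phi y|$ in $\Bb^{(k)}$: a block $j$ is common to $\Phi x$ and $\Phi y$ precisely when all of $e_{jk},\dots,e_{(j+1)k-1}$ agree, i.e.\ exactly when $(j+1)k\le n$. Hence the two telescoped paths share exactly their first $\lfloor n/k\rfloor$ blocks, the first disagreement occurring in the block that contains the edge $e_n$ at which $x$ and $y$ first differ. Since $\rho^{(k)}$ has parameter $\alpha^k$ and is again regular, this yields $\rho^{(k)}(\Phi x,\Phi y)=\alpha^{k\lfloor n/k\rfloor}$.

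Finally I would combine these two formulas through the elementary double inequality $k\lfloor n/k\rfloor\le n<k\lfloor n/k\rfloor+k$. Because $0<\alpha<1$, raising $\alpha$ to these exponents reverses the order and gives $\alpha^{k\lfloor n/k\rfloor+k}<\alpha^n\le\alpha^{k\lfloor n/k\rfloor}$, which, reading off the values above, is exactly $\alpha^k\rho^{(k)}<\rho\le\rho^{(k)}$; the left inequality is strict for every pair $x\ne y$ because $n<k(\lfloor n/k\rfloor+1)$ strictly, while equality on the right holds when $k$ divides $n$. The bi-Lipschitz equivalence is then immediate, either directly or by invoking Lemma~\ref{cantoremb10.lem-bilipsscantor} with $c_1=\alpha^k$ and $c_2=1$. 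There is no serious obstacle here: the only point requiring care is the floor-function bookkeeping that identifies the common prefix length in the telescoped diagram, together with keeping track of which of the two inequalities is strict.
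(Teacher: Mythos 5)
Your proof is correct and follows essentially the same route as the paper's: both write $n=|x\wedge y|$ as $qk+r$ with $0\le r<k$ (your $q=\lfloor n/k\rfloor$), observe that $\rho^{(k)}=\alpha^{kq}$ on the telescoped paths, and conclude from $kq\le n<kq+k$. Your additional bookkeeping about the identification $\Phi$ and the common prefix length is a slightly more explicit version of what the paper leaves implicit.
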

\begin{proof}
Let \(x,y \in \Pi_\infty\) with \(|x\wedge y| = n\) so that \(\rho(x,y) = \alpha^n\).
Write \(n = q k + r\) with \(0\le r < k\).
We have \(\rho^{(k)}(x,y) = (\alpha^k)^q\).
Hence we get  \( \rho(x,y) = \alpha^{qk +r} \le \alpha^{qk}=\rho^{(k)}(x,y)\) and \( \alpha^k \rho^{(k)}(x,y) = \alpha^{k + (n-r)} < \alpha^n = \rho(x,y)\).
This completes the proof.
\end{proof}

\begin{defini}
\label{cantoremb10.def-zeta}
The {\em zeta}-function of a self-similar utrametric Cantor set  \((\Pi_\infty,\rho)\) is the (formal) complex series
\[
\zeta(s) = \sum_{n=0}^\infty \sum_{\gamma \in \Pi_n} \diam_\rho ([\gamma])^{s} \,.
\]
When it exists, its {\em abscissa of convergence} is the real $s_0>0$ such that $\zeta(s)$  is holomorphic in the half-plane \(\Re(s)>s_0\), and 
singular at  $s_0$.
\end{defini}

\begin{lemma}
\label{cantoremb10.lem-s0}
Let \((\Pi_\infty,\rho)\) be a self-similar ultrametric Cantor set.
Let $\alpha$ be the parameter of $\rho$, and $\pf$ the Perron-Frobenius eigenvalue of the adjacency matrix as in equation~\eqref{cantoremb10.eq-adjmatrix}.
The abscissa of convergence of the {\it zeta}-function exists and is given by
\begin{equation}
\label{cantoremb10.eq-s0}
s_0 = \frac{\log(\pf)}{-\log(\alpha)} \,.
\end{equation}
\end{lemma}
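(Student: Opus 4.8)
The plan is to reduce the statement to the analysis of a single power series whose coefficients count the paths $\#\Pi_n$, and then to read off its radius of convergence from the Perron--Frobenius eigenvalue $\pf$.

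First I would dispose of the weights $a_\gamma$ and of the complex variable. Since $\lvert \diam_\rho([\gamma])^s\rvert=\diam_\rho([\gamma])^{\Re(s)}$, absolute convergence of $\zeta(s)$ depends only on $\Re(s)$, so it suffices to study the series for real arguments. Writing $a=\inf_\gamma a_\gamma>0$ and $a'=\sup_\gamma a_\gamma<\infty$ (Definition~\ref{cantoremb10.def-sscantor}), for real $s>0$ and $\gamma\in\Pi_n$ one has $\diam_\rho([\gamma])^s=a_\gamma^s\,\alpha^{ns}$ with $a_\gamma^s$ trapped between the two positive constants $\min(a^s,a'^s)$ and $\max(a^s,a'^s)$. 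Hence, term by term,
\[
\min(a^s,a'^s)\,\#\Pi_n\,\alpha^{ns}\ \le\ \sum_{\gamma\in\Pi_n}\diam_\rho([\gamma])^s\ \le\ \max(a^s,a'^s)\,\#\Pi_n\,\alpha^{ns},
\]
so for each fixed real $s>0$ the series $\zeta(s)$ converges if and only if $\sum_n \#\Pi_n\,\alpha^{ns}$ does. (Equivalently, by Lemma~\ref{cantoremb10.lem-bilipsscantor} and Remark~\ref{cantoremb10.rem-reg} I may assume $\rho$ regular, in which case $\diam_\rho([\gamma])=\alpha^{n}$ on $\Pi_n$ and $\zeta(s)=\sum_n \#\Pi_n\,\alpha^{ns}$ exactly.) Thus everything reduces to the power series $F(x)=\sum_{n\ge 0}\#\Pi_n\,x^{n}$ evaluated at $x=\alpha^{s}$.

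Next I would compute the growth rate of the coefficients $\#\Pi_n$. A path of $\Pi_n$ is determined by its sequence of edges from the root, so, using $E_0\cong V_1$ and the stationarity of the diagram, $\#\Pi_n=\mathbf 1^{T}A^{\,n-1}\mathbf 1$ for $n\ge 1$, the sum of all entries of $A^{n-1}$, where $\mathbf 1$ is the all-ones vector on $\Vv$. Because all entries are non-negative, this sum is comparable, up to fixed constants depending only on $\#\Vv$, to the operator norm $\lVert A^{n-1}\rVert$; by Gelfand's formula $\lim_n\lVert A^{n-1}\rVert^{1/n}$ equals the spectral radius of $A$, which for a non-negative matrix is its Perron--Frobenius eigenvalue $\pf$. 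Therefore
\[
\limsup_{n\to\infty}(\#\Pi_n)^{1/n}=\pf .
\]
When $A$ is primitive, as in the substitution case of Example~\ref{cantoremb10.ex-substitution}, one even gets the sharper $\#\Pi_n\sim C\pf^{\,n}$ from the Perron--Frobenius projection, but the $\limsup$ above is all that is needed.

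Finally I would read off the abscissa. By the Cauchy--Hadamard formula the radius of convergence of $F$ is $R=1/\limsup(\#\Pi_n)^{1/n}=1/\pf$, so $F(x)$ converges for $|x|<1/\pf$ and diverges for real $x>1/\pf$. Substituting $x=\alpha^{s}$ and using $\alpha\in(0,1)$, the condition $\alpha^{s}<1/\pf$ is equivalent to $s>\log(\pf)/(-\log\alpha)=:s_0$; on this range the series converges absolutely and uniformly on compacta (by domination at any real $s_1\in(s_0,\Re(s))$), and since each partial sum is entire in $s$, Weierstrass' theorem makes $\zeta$ holomorphic on $\Re(s)>s_0$. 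Conversely, for real $s<s_0$ the comparison of the first paragraph forces $\zeta(s)=+\infty$, and since $F$ has non-negative coefficients, Pringsheim's theorem places a singularity of $F$ at the real boundary point $x=1/\pf$, that is, of $\zeta$ at $s_0$. This identifies the abscissa of convergence as $s_0=\log(\pf)/(-\log\alpha)$. The one genuinely substantive step is the middle one, controlling $\#\Pi_n$ by $\pf^{\,n}$: if one does not wish to invoke primitivity, the only care needed is to check that the sum of entries of $A^{n-1}$ does grow at the spectral-radius rate and is not asymptotically swamped by a sub-dominant block, which Gelfand's formula guarantees, while primitivity merely upgrades the $\limsup$ to a genuine limit with a positive multiplicative constant.
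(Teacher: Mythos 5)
Your proposal is correct and follows essentially the same route as the paper: sandwich $\zeta(s)$ between constant multiples of $\sum_n \#\Pi_n\,\alpha^{ns}$ using the bounds on $a_\gamma$, and identify the growth rate of $\#\Pi_n$ with $\pf^{\,n}$ to locate the critical line at $\Re(s)=-\log(\pf)/\log(\alpha)$. The only difference is that you justify the growth of $\#\Pi_n$ via Gelfand's formula and handle holomorphy and the singularity at $s_0$ via Weierstrass and Pringsheim, details the paper asserts more briefly (via two-sided bounds $c'_1\pf^{\,n}\le\#\Pi_n\le c'_2\pf^{\,n}$).
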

\begin{proof}
The {\it zeta}-function of \((\Pi_\infty,\rho)\) reads
\[
\zeta(s) = \sum_{n=0}^\infty \sum_{\gamma\in\Pi_n} a_\gamma^s \;  \alpha^{ns} \,,
\]
where the $a_\gamma$ are the parameters of $\rho$ as in Definition~\ref{cantoremb10.def-sscantor}.
Set $c_1 = \inf_\gamma a_\gamma$ and $c_2= \sup_\gamma a_\gamma$.
For all $s>0$ we have the inequalities
\[
c_1^s \sum_{n=0}^\infty \#(\Pi_n) \;  \alpha^{ns} 
\; \le \; \zeta(s)
\; \le \;
c_2^s \sum_{n=0}^\infty \#(\Pi_n) \;  \alpha^{ns} \,.
\]
Now \(\#(\Pi_n)\) grows like $\pf^n$ with $n$, that is, there are (uniform) constants \(0<c'_1<c'_2\) such that  
\[
c'_1 \pf^{n} \le \#(\Pi_n) \le c'_2 \pf^{n}\,. 
\]
Therefore one gets the inequalities
\[
c_1^s c'_1 \sum_{n=0}^\infty (\pf \alpha^s)^n \le \zeta(s) \le c_2^s c'_2 \sum_{n=0}^\infty (\pf \alpha^s)^n\,,
\]
and one sees that the power series is divergent for \(\Re(s)= - \log(\pf)/ \log(\alpha)\), and absolutely convergent for \(\Re(s)> - \log(\pf)/ \log(\alpha)\).
\end{proof}

\begin{theo}
\label{cantoremb10.thm-hausdorff}
The Hausdorff dimension of a self-similar ultrametric Cantor set is equal to the abscissa of convergence of its {\it zeta}-function. 
\end{theo}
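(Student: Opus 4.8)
The plan is to prove the two inequalities $\dim_H(\Pi_\infty)\le s_0$ and $\dim_H(\Pi_\infty)\ge s_0$, where by Lemma~\ref{cantoremb10.lem-s0} the abscissa of convergence is $s_0=\log(\pf)/(-\log(\alpha))$. Because Hausdorff dimension is a bi-Lipschitz invariant, Remark~\ref{cantoremb10.rem-reg}(i) lets me reduce to the regular case, so that $\diam_\rho([\gamma])=\alpha^{n}$ for every $\gamma\in\Pi_n$. I will repeatedly use the identity $\pf\,\alpha^{s_0}=1$, equivalently $\alpha^{ns_0}=\pf^{-n}$, together with the two-sided bound $c'_1\pf^{n}\le\#(\Pi_n)\le c'_2\pf^{n}$ already used in the proof of Lemma~\ref{cantoremb10.lem-s0}.

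For the upper bound, the level-$n$ cylinders $\{[\gamma]:\gamma\in\Pi_n\}$ form a partition of $\Pi_\infty$ into clopen sets of diameter $\alpha^{n}\to0$, so they are admissible covers for the Hausdorff premeasure $\mathcal H^{s}_{\alpha^{n}}$. Evaluating the limit $\mathcal H^{s}(\Pi_\infty)=\lim_{\delta\to0}\mathcal H^{s}_{\delta}(\Pi_\infty)$ along $\delta=\alpha^{n}$ gives
\[
\mathcal H^{s}(\Pi_\infty)\;\le\;\lim_{n\to\infty}\sum_{\gamma\in\Pi_n}\diam_\rho([\gamma])^{s}\;\le\;\lim_{n\to\infty}c'_2\,(\pf\,\alpha^{s})^{n}\,.
\]
For $s>s_0$ one has $\pf\,\alpha^{s}<1$, so the right-hand side is $0$ and $\mathcal H^{s}(\Pi_\infty)=0$; letting $s\downarrow s_0$ yields $\dim_H(\Pi_\infty)\le s_0$.

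For the lower bound I will exhibit a measure and invoke the mass distribution principle. Let $v=(v_w)_{w\in\Vv}$ be a strictly positive right Perron--Frobenius eigenvector, $Av=\pf v$, and define a set function on cylinders by $\mu([\gamma])=\pf^{-|\gamma|}\,v_{r(\gamma)}$, normalized so that $\mu(\Pi_\infty)=1$. The one-step computation
\[
\sum_{e:\,s(e)=r(\gamma)}\mu([\gamma e])\;=\;\pf^{-(|\gamma|+1)}\,(Av)_{r(\gamma)}\;=\;\pf^{-|\gamma|}\,v_{r(\gamma)}\;=\;\mu([\gamma])
\]
shows that $\mu$ is additive on the cylinder semiring, hence extends to a Borel probability measure on $\Pi_\infty$. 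As $\Vv$ is finite and $v>0$, there are constants $0<v_{\min}\le v_{\max}<\infty$ with $v_{\min}\,\alpha^{ns_0}\le\mu([\gamma])\le v_{\max}\,\alpha^{ns_0}$ for $\gamma\in\Pi_n$. The decisive ultrametric feature now enters: any $U\subset\Pi_\infty$ with $0<\diam_\rho(U)=d$ sits inside a closed ball $\bar B(x,d)$, and in this ultrametric every closed ball is a cylinder, namely $\bar B(x,d)=[x_{[0,n]}]$ for the integer $n$ with $\alpha^{n}\le d<\alpha^{n-1}$. Therefore $\mu(U)\le\mu([x_{[0,n]}])\le v_{\max}\,\alpha^{ns_0}\le v_{\max}\,d^{s_0}$, and the mass distribution principle gives $\mathcal H^{s_0}(\Pi_\infty)\ge\mu(\Pi_\infty)/v_{\max}>0$, whence $\dim_H(\Pi_\infty)\ge s_0$. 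Combined with the upper bound this proves the theorem.

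I expect the main obstacle to be the construction and verification of the Perron--Frobenius measure: one must check that the eigenvector weighting makes $\mu$ genuinely additive from one level to the next (this is exactly where $Av=\pf v$ is used) and that the resulting cylinder masses are comparable to $\diam_\rho([\gamma])^{s_0}$ uniformly in $\gamma$. Once this is in place, the fact---special to ultrametric spaces---that metric balls coincide with cylinders collapses the packing/covering subtleties that usually accompany self-similar sets in general metric spaces into the single elementary estimate $\mu(U)\le v_{\max}\,\diam_\rho(U)^{s_0}$.
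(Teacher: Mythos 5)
Your proof is correct, and your upper bound $\dim_H\le s_0$ is essentially the paper's own computation with level-$n$ cylinder partitions. For the lower bound you take a genuinely different route. The paper constructs no measure at all: it uses ultrametricity and compactness to reduce arbitrary covers to finite cylinder partitions, then telescopes the diagram far enough that every cylinder satisfies $\diam([\gamma])^d<\sum_{\eta}\diam([\eta])^d$ over its one-step extensions; coarsening any cylinder partition therefore only decreases the cost, the trivial cover is optimal, and $H^d(C)\ge\diam(C)^d=1$ for every $d<s_0$. You instead build the self-similar measure $\mu([\gamma])=\pf^{-|\gamma|}v_{r(\gamma)}$ from a right Perron--Frobenius eigenvector and invoke the mass distribution principle, using the fact (special to ultrametrics) that every closed ball is a cylinder. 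Your additivity check is exactly where $Av=\pf v$ is needed and it is correct, as is the comparison $\mu([\gamma])\asymp\diam([\gamma])^{s_0}$. Your argument is the standard one from fractal geometry and is in one respect stronger: it gives $H^{s_0}(C)>0$ at the critical exponent itself, and the measure you build is essentially the natural invariant measure that reappears in Section~\ref{cantoremb10.sect-Laplacian} as the Dixmier trace. The price is two points you should make explicit: the existence of a strictly positive eigenvector requires primitivity (or at least irreducibility) of $A$ --- this does hold here, since Hypothesis~\ref{cantoremb10.hyp-cantor} for a stationary diagram forces a power of $A$ to be positive, and the paper already relies on $\#\Pi_n\asymp\pf^n$ --- and the extension of $\mu$ from the cylinder semiring to a Borel measure deserves a word (Carath\'eodory/Kolmogorov on the compact totally disconnected space $\Pi_\infty$). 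The paper's telescoping argument avoids both, at the cost of only establishing positivity of $H^d$ for $d$ strictly below $s_0$.
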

\begin{proof}
Since the Hausdorff dimension is invariant under bi-Lipschitz homeomorphisms, one can assume that the self-similar Cantor set $C=(\Pi_\infty, \rho)$ is  regular.
Denote by $\alpha$ the parameter of $\rho$.
We recall the definition of the Hausdorff dimension of $C$.
For $\delta>0, d\ge 0$ set 
\begin{equation}
\label{cantoremb10.eq-haus1}
H_\delta^d(C) = \inf \bigl\{
\sum_{i\in I} \diam(u_i)^d \, : \, (u_i)_{i\in I} \text{ is a covering by balls of diameters less than } \delta
\bigr\}\,,
\end{equation}
and define the $d$-dimensional Hausdorff content of $C$ to be
\begin{equation}
\label{cantoremb10.eq-haus2}
H^d(C) = \inf_{\delta} H^d_\delta(C) \,.
\end{equation}
The Hausdorff dimension is given by the critical  value of the Hausdorff content, namely:
\begin{equation}
\label{cantoremb10.eq-haus3}
\dim_H (C) = \inf \{ d \, : \, H^d(C) = 0 \} = \sup \{ d\, : \, H^d(C) > 0 \}\,.
\end{equation}

Consider the case $d> s_0$ first.
For any $n\in \NM$ there is a clopen partition of $C$ by the cylinders $[\gamma], \gamma \in \Pi_n$, so we have
\[
H^d(C) \le H^d_{\alpha^n}(C) \le \sum_{\gamma \in \Pi_n}  (\alpha^n)^d \le c \pf^n \alpha^{nd} = c \alpha^{n(d-s_0)}\,,
\]
where we used equation \eqref{cantoremb10.eq-s0} for the last equality.
This inequality holds for all $n$, and as $n$ tends to infinity the term \(\alpha^{n(d-s_0)}\) tends to zero. 
Hence \(H^d(C)=0\) for $d>s_0$, and this proves
\begin{equation}
\label{cantoremb10.eq-ineqhaus1}
 \dim_H (C) \le s_0  \,.
\end{equation}

Consider now the case $d <s_0$.
Notice to begin with, that one can restrict to coverings that are finite clopen partitions of $C$, {\it i.e.\ } partitions by cylinders \([\gamma], \gamma\in\Pi\).
This is because, first: as $C$ is compact, one can consider finite coverings only, and second: since $\rho$ is a ultrametric, if two balls intersect then one is contained in the other.

Further, we will consider the $k$-th telescope \(C^{(k)}= (\Pi_\infty^{(k)}, \rho^{(k)})\) of $C$ as in Lemma~\ref{cantoremb10.lem-telescoping}, for an integer $k\ge 1$ to be determined later. 
For ay $k\ge 1$, $C^{(k)}$ is bi-Lipschitz equivalent to $C$, so they have the same Hausdorff dimension:
\begin{equation}
\label{cantoremb10.eq-haus4}
\forall k\ge 1\,, \quad \dim_H (C) = \dim_H(C^{(k)}) \,.
\end{equation}
Consider a path $\gamma \in \Pi^{(k)}_n$, and let $\ext^{(k)}(\gamma)$ be the set of paths in $\Pi^{(k)}_{n+1}$ that extend $\gamma$: \(\eta \in \ext^{(k)}(\gamma)  \iff \eta_{[0,n]} = \gamma\).
The cardinality of $\ext^{(k)}(\gamma)$ equals the number of edges in $\Bb^{(k)}$ that extend $\gamma$ one generation further: it is given then by the sum of elements in a line of the $k$-th power of the adjacency matrix of $\Bb$, and behaves like $\pf^k$ as $k$ tends to infinity.
More precisely, there exists a constant \(c_{r(\gamma)}>0\) (that only depends on the range vertex of $\gamma$) such that one has
\begin{equation}
\label{cantoremb10.eq-haus6}
\# \ext^{(k)}(\gamma) = c_{r(\gamma)} \pf^k + o(\pf^k)\,.
\end{equation}
Since $\diam([\gamma])= \alpha^{kn}$ we have
\[
\sum_{ \eta \in \ext^{(k)}(\gamma) } \; \diam([\eta])^d = \#  \ext^{(k)}(\gamma) \; (\alpha^{k(n+1)})^d =
( c_{r(\gamma)} \pf^k  + o(\pf^k) ) \alpha^{kd}  \; \diam([\gamma])^d \,, 
\]
where the last equality holds by equation~\eqref{cantoremb10.eq-haus6}.
Now, for $d<s_0$, equation \eqref{cantoremb10.eq-s0} implies that $\pf\alpha^d >1$.
So there exists $k_1$ such that for all $k\ge k_1$ the term \((c_{r(\gamma)} \pf^k  + o(\pf^k)  )\alpha^{kd}\) in the previous equation to be strictly greater than $1$.
And since the constant $c_{r(\gamma)}$ only depends on the range vertex of $\gamma$, and \(\# \Vv^{(k)} = \#\Vv\) does not depend on $k$, there exists $k_2\ge k_1$, such that for all $k\ge k_2$ this inequality holds for all paths $\gamma\in\Pi^{(k)}$.
Hence for $k\ge k_2$ we have
\begin{equation}
\label{cantoremb10.eq-haus5}
\forall \gamma \in \Pi^{(k)} \,, \quad \diam([\gamma])^d < \sum_{ \eta \in E^{(k)}(\gamma) } \diam([\eta])^d\,.
\end{equation}
We have proven that for any path $\gamma$, the diameter of its cylinder to the power $d$ is strictly smaller than the sum of those of its extensions.
This means that for any  $k\ge k_2$ the trivial covering of $C^{(k)}$ by itself (which corresponds to a path of length zero) minimizes the sum in equation~\eqref{cantoremb10.eq-haus1}.
Therefore, using equation~\eqref{cantoremb10.eq-haus4}, we deduce that for $k\ge k_2$
\[
H^d(C) = H^d(C^{(k)}) \ge \diam(C^{(k)})^d = 1 \quad \text{ for } d< s_0. 
\]
And with equation~\eqref{cantoremb10.eq-haus3} we conclude that
\begin{equation}
\label{cantoremb10.eq-ineqhaus2}
 \dim_H (C) \le s_0  \,.
\end{equation}
Together with equation~\eqref{cantoremb10.eq-ineqhaus1} this completes the proof.
\end{proof}

\section{Bi-Lipschitz embedding in $\RM^n$}
\label{cantoremb10.sect-Lipschitz}

\begin{theo}
\label{cantoremb10.thm-Lipschitz}
A self-similar ultrametric Cantor set is bi-Lipschitz embeddable in $\RM^n$, for some $n$.
\end{theo}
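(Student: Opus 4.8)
The plan is to reduce to a clean normal form and then write down an explicit coordinatewise embedding, using telescoping to control the one parameter that the naive construction is sensitive to. By Remark~\ref{cantoremb10.rem-reg}(i) and the bi-Lipschitz invariance of the conclusion, I may assume $(\Pi_\infty,\rho)$ is regular, so that $\diam_\rho([\gamma]) = \alpha^{|\gamma|}$ and $\rho(x,y)=\alpha^{|x\wedge y|}$. The additive embedding I have in mind will only deliver a lower Lipschitz bound when $\alpha<1/2$, so the first real step is to arrange this. By Lemma~\ref{cantoremb10.lem-telescoping}, the telescoped Cantor set $(\Pi_\infty^{(k)},\rho^{(k)})$ is bi-Lipschitz equivalent to $(\Pi_\infty,\rho)$ and has contraction parameter $\alpha^k$, which is $<1/2$ for $k$ large. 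Since a bi-Lipschitz embedding of $\Pi_\infty^{(k)}$ precomposed with the bi-Lipschitz equivalence $\Pi_\infty\to\Pi_\infty^{(k)}$ is again bi-Lipschitz, I may replace $C$ by $C^{(k)}$ and henceforth assume $\alpha<1/2$, with finite (telescoped) edge set $\Ee$.

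With that in hand, I would set $N=\#\Ee$, let $\{b_e\}_{e\in\Ee}$ be the standard orthonormal basis of $\RM^N$ indexed by the edges, and define, for $x=(e_0(x),e_1(x),e_2(x),\dots)\in\Pi_\infty$,
\[
f(x) = \sum_{k=0}^\infty \alpha^k\, b_{e_k(x)} \in \RM^N .
\]
Because $\|\alpha^k b_{e_k(x)}\|=\alpha^k$ and $\alpha<1$, the series converges absolutely, so $f$ is well defined. The verification of the two Lipschitz bounds is then short. Given $x\neq y$ with $m=|x\wedge y|$, so $\rho(x,y)=\alpha^m$, one has $e_k(x)=e_k(y)$ for $k<m$ and $e_m(x)\neq e_m(y)$, whence
\[
f(x)-f(y) = \alpha^m\bigl(b_{e_m(x)}-b_{e_m(y)}\bigr) + \sum_{k=m+1}^\infty \alpha^k\bigl(b_{e_k(x)}-b_{e_k(y)}\bigr).
\]
Since distinct basis vectors are orthogonal, every difference $b_e-b_{e'}$ has norm $0$ or $\sqrt2$; this gives at once the upper bound $\|f(x)-f(y)\|\le \frac{\sqrt2}{1-\alpha}\,\alpha^m$. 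For the lower bound the leading term has norm exactly $\sqrt2\,\alpha^m$ while the tail is bounded by $\frac{\sqrt2\,\alpha}{1-\alpha}\,\alpha^m$, so the reverse triangle inequality yields
\[
\|f(x)-f(y)\| \ge \sqrt2\,\alpha^m\Bigl(1-\frac{\alpha}{1-\alpha}\Bigr) = \sqrt2\,\frac{1-2\alpha}{1-\alpha}\,\alpha^m,
\]
a strictly positive multiple of $\rho(x,y)$ precisely because $\alpha<1/2$. Thus $f$ is a bi-Lipschitz embedding into $\RM^N$, and composing with the telescoping equivalence proves the theorem.

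I expect the only genuine obstacle to be this lower Lipschitz estimate: in any such additive construction the accumulated tail $\sum_{k>m}\alpha^k(\cdots)$ can swamp the single branching term at level $m$ when $\alpha$ is close to $1$, destroying injectivity control. The device that defuses this is exactly Lemma~\ref{cantoremb10.lem-telescoping}, which shrinks the contraction parameter below $1/2$ at the price of enlarging $\Ee$ — and hence the ambient dimension $N$. For the present statement that price is harmless, since only \emph{some} $n$ is required; pinning $n$ down to $[\dim_H(C)]+1$ is the separate, harder task handled in Theorem~\ref{cantoremb10.thm-hoelder2}.
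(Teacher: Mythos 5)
Your proof is correct, but it takes a genuinely different route from the paper's, and the comparison is instructive. The paper also reduces to a regular metric and builds an additive embedding, but it keeps the edge alphabet $\Ee$ fixed and instead interleaves the edge sequence across $n$ real coordinates, setting $F^i(x)=\sum_{j}\beta(x_{i+nj})\alpha^{nj}$ for an injective real labelling $\beta:\Ee\to(0,+\infty)$; each coordinate is then essentially a base-$\alpha^{-n}$ expansion, and the lower Lipschitz bound requires $\alpha^{n}<\dmi/(\dmi+\dma)$, i.e.\ $n$ large. You keep a single vector-valued series, replace the real labels by orthonormal vectors $b_e$, and use Lemma~\ref{cantoremb10.lem-telescoping} to force the contraction parameter below $1/2$. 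The two devices play the same role (making the geometric tail lose to the first discrepancy), but they trade off differently: your ambient dimension is $\#\Ee^{(k)}\sim\pf^{k}$, far larger than the paper's $n=[-\log p/\log\alpha]+1$, which is why the paper's interleaving is the version that can be sharpened to $[\dim_H(C)]+1$ in Theorem~\ref{cantoremb10.thm-hoelder2}; as you note, for the present statement the waste is harmless. Your orthogonality trick buys genuine robustness in the lower bound: the leading difference $b_{e}-b_{e'}$ has norm exactly $\sqrt2$ whenever the edges differ, and any later coincidences of edges only shrink the tail, whereas in the paper's scalar version the ``first non-vanishing term'' of a fixed coordinate $F^i$ need not sit where the estimate places it (only the coordinate congruent to the position of the first discrepancy mod $n$ is guaranteed the stated leading term, which is what actually feeds the bound on $\|F(x)-F(y)\|$). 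Two cosmetic remarks: with the paper's conventions the first differing edge is $e_{m+1}$, not $e_m$, which only changes your constants by a factor $\alpha$; and the level-zero edges form $E_0\cong V_1$ rather than lying in $\Ee$, so your basis should be indexed by $E_0\sqcup\Ee$ (or the $k=0$ term dropped). Neither affects the validity of the argument.
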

\begin{proof}
Consider a self-similar ultrametric Cantor set \((\Pi_\infty, \rho)\).
We can assume that $\rho$ is regular, with parameter \(\alpha \in (0,1)\).
Make a choice of a one-to-one mapping \(\beta : \Ee \rightarrow (0, +\infty)\), and define 
\begin{equation}
\label{cantoremb10.eq-deltabeta}
\dmi = \min_{e,f \in \Ee} | \beta(e) -\beta(f) | \,, \quad 
\dma = \max_{e,f \in \Ee} | \beta(e) -\beta(f) | \,.
\end{equation}
For an integer $n\in \NM$ we define a map \( F : \Pi_\infty \rightarrow \RM^n\), whose
$i$-th coordinate, \(1 \le i \le n\), is given by
\begin{equation}
\label{cantoremb10.eq-bilipemb}
F^i(x) = \sum_{j=0}^\infty \beta(x_{i + nj}) \alpha^{nj} \,,
\qquad 
x=(x_j)_{j\ge 0}  \in \Pi_\infty \,.
\end{equation}

Fix $x,y \in \Pi_\infty$, with \(|x\wedge y| = m\), so that we have \(\rho(x,y) = \alpha^{m}\).
We write \(m=nj_0 + i_0\), for some \(j_0 \in \NM\) and \( 0\le i_0 < n\).
We have the inequalities \( m < nj_0 \le m-n\), and get the bounds
\begin{equation}
\label{cantoremb10.eq-bdalpha}
\alpha^m < \alpha^{nj_0} \le \alpha^{-n} \alpha^m \,.
\end{equation}
We consider $F^i$ and look at the case \(i \le i_0\) first.
All the terms in \(F^i(x)-F^i(y)\) vanish for \(j\le j_0\), so we get the upper bound
\[
|F^i(x)-F^i(y)| \le  \delta_{\max} \sum_{j\ge j_0+1} \alpha^{nj} =
\frac{\delta_{\max}}{1-\alpha^n} \alpha^{n (j_0+1)} \le \frac{\delta_{\max}}{1-\alpha^n} \alpha^m\,,   
\]
where we used equation~\eqref{cantoremb10.eq-bdalpha} for the last inequality.
We get the upper Lipschitz bound
\begin{equation}
\label{cantoremb10.eq-uplipbd1}
|F^i(x)-F^i(y)| \le \frac{\delta_{\max}}{1-\alpha^n} \alpha^m = c_1 \rho(x,y) \,, \quad i\le i_0 
\end{equation}
For the lower bound we single out the first non-vanishing term in the series and write
\begin{equation}
\label{cantoremb10.eq-dimlip}
\begin{split}
|F^i(x)-F^i(y)| & \ge \delta_{\min} \alpha^{n(j_0+1)} - 
\Bigl| \sum_{j \ge j_0+ 2} (\beta(x_{i + nj}) - \beta(y_{i + nj})) \alpha^{nj} 
\Bigr| \\
& \ge \delta_{\min} \alpha^{n(j_0+1)} - \delta_{\max} \frac{ \alpha^{n(j_0+2)} }{1-\alpha^n}
=
\frac{\delta_{\min} - \alpha^n(\delta_{\min}+\delta_{\max})}{1-\alpha^n}
\alpha^{n (j_0+1)} \,.
\end{split}
\end{equation}
Since \(\alpha\in (0,1)\) we can find $n$ large enough for the numerator of the last fraction to be non-negative.
Using equation~\eqref{cantoremb10.eq-bdalpha} we get the lower Lipschitz bound
\begin{equation}
\label{cantoremb10.eq-lowlipbd1}
|F^i(x)-F^i(y)| \ge  \frac{\delta_{\min} - \alpha^n(\delta_{\min}+\delta_{\max})}{1-\alpha^n} \alpha^n \, \alpha^m
= c_2 \rho(x,y) \,, \quad i\le i_0 
\end{equation}

The case \(i> i_0\) is similar and yields the same bounds, but with constants multiplied by $\alpha^{-n}$.
That is the upper bound is the same as in equation \eqref{cantoremb10.eq-uplipbd1} but with constant
\(c_1 \alpha^{-n}\), and the lower bound is the same as in equation \eqref{cantoremb10.eq-lowlipbd1} but
with constant \(c_2 \alpha^{-n}\).
We let \(c_- = c_2 \) and \(c_+ = c_1 \alpha^{-n}\) and get the bounds
\begin{equation}
\label{cantoremb10.eq-lipbd1}
c_- \rho(x,y) \le |F^i(x)-F^i(y)| \le c_+ \rho(x,y) \,, \quad i = 1, \cdots n\,. 
\end{equation}
From this we deduce immediately the Lipschitz bounds for $F$ .
For the Euclidean norm, one gets for instance
\[
c_- \sqrt{n} \, \rho(x,y) \le \|F(x)-F(y)\| \le c_+ \sqrt{n} \, \rho(x,y) \,. 
\]
\end{proof}

We now give lower bounds on $n$.

\begin{lemma} 
\label{cantoremb10.lem-bilipdim}
Let $p = \# \Ee$, and set \(n = [-\log(p)/\log(\alpha)] + 1\).
Then \((\Pi_\infty, \rho)\) can be bi-Lipschitz embedded in $\RM^n$.
\end{lemma}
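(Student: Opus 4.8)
Lemma~\ref{cantoremb10.lem-bilipdim} asserts a quantitative version of Theorem~\ref{cantoremb10.thm-Lipschitz}: if $p=\#\Ee$ is the number of edge-types in the stationary diagram and $\alpha$ is the parameter of the (regular) ultrametric, then the dimension $n$ of the target Euclidean space can be taken as small as $[-\log(p)/\log(\alpha)]+1$. So I need to track exactly where the construction in the proof of Theorem~\ref{cantoremb10.thm-Lipschitz} forces $n$ to be large, and show that this threshold is no larger than the stated value.

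Let me sketch the proof.

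\begin{proof}
The map $F$ and the Lipschitz estimates of Theorem~\ref{cantoremb10.thm-Lipschitz} apply to any $n$, \emph{provided} the numerator $\delta_{\min}-\alpha^n(\delta_{\min}+\delta_{\max})$ appearing in equations~\eqref{cantoremb10.eq-dimlip} and~\eqref{cantoremb10.eq-lowlipbd1} is strictly positive, as this is the only place where $n$ is required to be ``large enough''. The plan is therefore to estimate this threshold using an \emph{optimal} choice of the injection $\beta:\Ee\to(0,+\infty)$, and to check that the smallest admissible $n$ is bounded by $[-\log(p)/\log(\alpha)]+1$.

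First I would choose $\beta$ to be the identification of $\Ee$ with $\{1,2,\dots,p\}$, so that $\delta_{\min}=1$ and $\delta_{\max}=p-1$. With this choice the numerator becomes $1-\alpha^n\,p$, which is strictly positive precisely when $\alpha^n p<1$, i.e. when $n\log\alpha+\log p<0$. Since $\alpha\in(0,1)$ we have $\log\alpha<0$, so this rearranges to $n>-\log(p)/\log(\alpha)$. The smallest integer $n$ satisfying this strict inequality is exactly $[-\log(p)/\log(\alpha)]+1$ (using that $-\log(p)/\log(\alpha)>0$, so its integer part plus one is the least integer strictly exceeding it). For this value of $n$ the numerator is positive, hence all the Lipschitz bounds $c_-\rho(x,y)\le|F^i(x)-F^i(y)|\le c_+\rho(x,y)$ of equation~\eqref{cantoremb10.eq-lipbd1} hold with $c_->0$, and $F$ is a bi-Lipschitz embedding into $\RM^n$.

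The only genuine subtlety — and the main thing to verify carefully — is that the positivity threshold is governed by the \emph{ratio} $\delta_{\max}/\delta_{\min}$ rather than the absolute values, and that this ratio is minimized by consecutive integer values of $\beta$. Indeed, rewriting the positivity condition as $\alpha^n<\delta_{\min}/(\delta_{\min}+\delta_{\max})=1/(1+\delta_{\max}/\delta_{\min})$, one wants $\delta_{\max}/\delta_{\min}$ as small as possible. For any injection of $p$ points into $(0,+\infty)$ one has $\delta_{\max}\ge(p-1)\,\delta_{\min}$, with equality for the $p$ consecutive integers; hence $1+\delta_{\max}/\delta_{\min}\ge p$, so the best attainable threshold is exactly $\alpha^n<1/p$, giving the stated $n$. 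This confirms that the integer-valued $\beta$ is optimal and that no smaller dimension is produced by the construction.
\end{proof}
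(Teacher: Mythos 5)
Your proof is correct and follows essentially the same route as the paper: both reduce the lemma to the positivity threshold for the numerator $\dmi-\alpha^n(\dmi+\dma)$ in equation~\eqref{cantoremb10.eq-dimlip} and observe that the ratio $\dmi/(\dmi+\dma)$ is optimized, with value $1/p$, by taking $\beta$ to be $p$ equally spaced (consecutive integer) values. Your write-up is in fact slightly more careful than the paper's, since you justify the inequality $\dma\ge(p-1)\dmi$ and the strictness of the positivity needed for the lower Lipschitz constant.
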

\begin{proof}
The condition on $n$ in equation \eqref{cantoremb10.eq-dimlip} in the proof of Theorem \ref{cantoremb10.thm-Lipschitz} reads
\[
n > \log \bigl(\frac{\delta_{\min}}{\delta_{\max}+\delta_{\min}}\bigr) 
/ \log(\alpha) \,.
\]
Now clearly, the minimum possible value of the quotient \(\dmi/(\dmi+\dma)\) is reached if one takes the constants $\beta(e)$ to be uniformly distributed, in which case it equals \(1/p\). 
\end{proof}

\begin{theo}
\label{cantoremb10.thm-hoelder2}
Let \(C=(\Pi_\infty,\rho)\) be a self-similar ultrametric Cantor set.
There exists a bi-Lipschitz embedding
\[
C \hookrightarrow \RM^{[\dim_H(C)]+1} \,,
\]
where $\dim_H(C)$ is the Hausdorff dimension of $C$, and $[\dim_H(C)]$ denotes its integer part.
\end{theo}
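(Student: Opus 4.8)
\emph{Proof plan.} The plan is to improve the dimension bound of Lemma~\ref{cantoremb10.lem-bilipdim} by applying it to the $k$-th telescope $C^{(k)}=(\Pi_\infty^{(k)},\rho^{(k)})$ rather than to $C$, and then letting $k\to\infty$. By Lemma~\ref{cantoremb10.lem-telescoping}, $C$ and $C^{(k)}$ are bi-Lipschitz equivalent for every $k\ge 1$; hence a bi-Lipschitz embedding of $C^{(k)}$ into a Euclidean space composes with this equivalence to give one of $C$ of the same target dimension. Since (as in the proof of Theorem~\ref{cantoremb10.thm-hausdorff}) we may assume $\rho$ regular, $C^{(k)}$ is again a regular self-similar ultrametric Cantor set, now with contraction parameter $\alpha^k$, so Lemma~\ref{cantoremb10.lem-bilipdim} applies to it.

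First I would record what that lemma produces for $C^{(k)}$. The edge set $\Ee^{(k)}$ of $\Bb^{(k)}$ is, by stationarity, the set of length-$k$ paths of $\Bb$ between two consecutive super-levels, so its cardinality is the sum of all entries of $A^k$,
\[
p_k := \#\Ee^{(k)} = \sum_{v,v'\in\Vv} (A^k)_{vv'} \,.
\]
Applying Lemma~\ref{cantoremb10.lem-bilipdim} to $C^{(k)}$ (whose parameter is $\alpha^k$) embeds it bi-Lipschitz into $\RM^{n_k}$ with
\[
n_k = [\,q_k\,]+1 \,, \qquad q_k := \frac{-\log(p_k)}{\log(\alpha^k)} = \frac{\log(p_k)}{k(-\log\alpha)} \,.
\]

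Next I would compute $\lim_k q_k$. By Perron--Frobenius theory, already used for $\#(\Pi_n)$ in Lemma~\ref{cantoremb10.lem-s0} and in equation~\eqref{cantoremb10.eq-haus6}, the sum of the entries of $A^k$ satisfies $p_k = \Theta(\pf^k)$, so $\log(p_k) = k\log(\pf) + O(1)$. Therefore
\[
q_k = \frac{\log(\pf)}{-\log\alpha} + \frac{O(1)}{k(-\log\alpha)} \;\xrightarrow[k\to\infty]{}\; \frac{\log(\pf)}{-\log\alpha} = s_0 = \dim_H(C) \,,
\]
using Lemma~\ref{cantoremb10.lem-s0} and Theorem~\ref{cantoremb10.thm-hausdorff}. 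Because $q_k\to s_0$, for all large $k$ one has $[\,q_k\,]\le[\,s_0\,]$: if $s_0\notin\NM$ the sequence eventually lies in the open interval $([s_0],[s_0]+1)$, whence $[\,q_k\,]=[\,s_0\,]$; if $s_0\in\NM$ then $[\,q_k\,]\in\{s_0-1,s_0\}$, still $\le[\,s_0\,]$. Hence $n_k=[\,q_k\,]+1\le[\dim_H(C)]+1$ for $k$ large, and composing the embedding $C^{(k)}\hookrightarrow\RM^{n_k}$ with the inclusion $\RM^{n_k}\hookrightarrow\RM^{[\dim_H(C)]+1}$ (padding with zero coordinates) and with the bi-Lipschitz equivalence between $C$ and $C^{(k)}$ finishes the argument.

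The crux --- and the only place where genuine input is needed --- is the two-sided growth estimate $p_k=\Theta(\pf^k)$. The naive bound of Lemma~\ref{cantoremb10.lem-bilipdim} is governed by $\log(\#\Ee)$, which overshoots $s_0$ precisely because $\#\Ee$ may far exceed the Perron--Frobenius eigenvalue $\pf$; telescoping replaces $\#\Ee$ by $p_k$, whose \emph{exponential growth rate} is exactly $\pf$, while the multiplicative constant contributes only an $O(1)$ term to $\log(p_k)$ that is absorbed after dividing by $k$. Everything else is the elementary behaviour of the integer part near the limit $s_0$, handled by the case distinction above.
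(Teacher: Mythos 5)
Your proposal is correct and follows essentially the same route as the paper: telescope $k$ times, apply the dimension bound of Lemma~\ref{cantoremb10.lem-bilipdim} to $C^{(k)}$ with $\#\Ee^{(k)}=\Theta(\pf^k)$ and parameter $\alpha^k$, note that the required dimension tends to $s_0=\dim_H(C)$ as $k\to\infty$, and transport the embedding back via the bi-Lipschitz equivalence of Lemma~\ref{cantoremb10.lem-telescoping}. Your explicit handling of the integer part near the limit is in fact slightly more careful than the paper's closing sentence.
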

\begin{proof}
By Theorem~\ref{cantoremb10.thm-hausdorff} and Lemma~\ref{cantoremb10.lem-s0} we have $d_H = \dim_H(C) = -\log(\pf) / \log(\alpha)$.
According to Lemma~\ref{cantoremb10.lem-bilipdim}, the condition on $n$ for the lower Lipschitz bound in equation \eqref{cantoremb10.eq-dimlip} in the proof of Theorem~\ref{cantoremb10.thm-Lipschitz} reads 
\begin{equation}
\label{cantoremb10.eq-tac}
n > - \log(1+ \dma/\dmi) / \log(\alpha)  = d_H \log(p) / \log(\pf) \,.
\end{equation}
where $p$ denotes the cardinality of $\Ee$.
For an integer $k\ge 1$, consider the $k$-th telescope \(C^{(k)}= (\Pi_\infty^{(k)}, \rho^{(k)})\) of $C$ as in Lemma~\ref{cantoremb10.lem-telescoping}.
The parameter of $\rho^{(k)}$ is $\alpha^k$.
The cardinality $p^{(k)}$ of $\Ee^{(k)}$ grows like $\pf^k$ with $k$:
\begin{equation}
\label{cantoremb10.eq-pk}
p^{(k)}  = c \pf^k + o(\pf^k)\,,
\end{equation}
where the constant $c$ is uniformly bounded (does not depend on $k$).
Hence the lower Lipschitz bound for $\Bb^{(k)}$ in equation \eqref{cantoremb10.eq-tac} reads
\[
n >  d_H \log(c\pf^k+ o(\pf^k)) / \log(\pf^k) = d_H + c'/k + o(1/k)\,, \quad c' = \log(c)/\log(\pf)\,.
\]
By Lemma \ref{cantoremb10.lem-telescoping}, $C$ is Lipschitz equivalent to \(C^{(k)}\) for all $k$.
Hence for any \( 0< \eps < 1\), by choosing $k$ large enough, this shows that $C$ is bi-Lipschitz embeddable in \(\RM^n\), for \(n> d_H + \eps\).
This completes the proof.
\end{proof}
As a corollary of this and Theorem~\ref{cantoremb10.thm-zeta} one gets the following.

\begin{theo} 
\label{cantoremb10.thm-bilipXi}
The transversal of a substitution tiling of $\RM^d$ is bi-Lipschitz embeddable in $\RM^{d+1}$.
\end{theo}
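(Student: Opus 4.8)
The plan is to obtain the statement directly from the two cited results, treating it as a genuine corollary rather than proving anything new. First I would recall, via Example~\ref{cantoremb10.ex-substitution}, that the canonical transversal $\Xi$ of a (primitive) substitution tiling of $\RM^d$ is bi-Lipschitz homeomorphic to the self-similar ultrametric Cantor set $(\Pi_\infty,\rho)$, where $\rho$ is the ultrametric attached to the stationary Bratteli diagram of the substitution through the weights of equation~\eqref{cantoremb10.eq-metricsubRd}. This identification is exactly what transports an abstract embedding statement about self-similar ultrametric Cantor sets into a geometric statement about $\Xi$, since bi-Lipschitz embeddability is stable under pre-composition with a bi-Lipschitz homeomorphism.

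Next I would pin down the Hausdorff dimension of the transversal. By Theorem~\ref{cantoremb10.thm-zeta}, the abscissa of convergence $s_0$ of the \emph{zeta}-function of $\Xi$ equals its Hausdorff dimension, and both equal $d$:
\[
\dim_H(\Xi) = s_0 = d \,.
\]
The decisive feature is that this dimension is an \emph{integer}. Applying the main embedding result, Theorem~\ref{cantoremb10.thm-hoelder2}, to $(\Pi_\infty,\rho)$ produces a bi-Lipschitz embedding into $\RM^{[\dim_H(\Xi)]+1}$; since $\dim_H(\Xi)=d\in\NM$, one has $[\dim_H(\Xi)]+1 = d+1$, so the target space is precisely $\RM^{d+1}$.

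Finally I would compose the bi-Lipschitz homeomorphism $\Xi \to (\Pi_\infty,\rho)$ from the first step with this embedding to obtain a bi-Lipschitz embedding $\Xi \hookrightarrow \RM^{d+1}$, as claimed. I do not expect a genuine obstacle: the content is entirely carried by Theorems~\ref{cantoremb10.thm-hoelder2} and~\ref{cantoremb10.thm-zeta}, and the remaining argument is a composition of maps. The only points deserving a word of care are (i) that primitivity of the substitution is assumed, so that the Perron--Frobenius machinery underpinning Theorem~\ref{cantoremb10.thm-zeta} applies and $\dim_H(\Xi)=d$ holds, and (ii) that the integrality of $\dim_H(\Xi)$ is exactly what makes the generic bound $[\dim_H(\Xi)]+1$ collapse to the sharp value $d+1$ rather than overshooting.
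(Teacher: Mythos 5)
Your proposal is correct and matches the paper exactly: the paper states Theorem~\ref{cantoremb10.thm-bilipXi} as an immediate corollary of Theorem~\ref{cantoremb10.thm-hoelder2} and Theorem~\ref{cantoremb10.thm-zeta}, with no further argument given. Your spelled-out version (transporting the statement to $(\Pi_\infty,\rho)$ via the bi-Lipschitz identification with $\Xi$, using $\dim_H(\Xi)=d$ so that $[\dim_H(\Xi)]+1=d+1$, and composing) is precisely the intended reasoning.
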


\section{The Laplacians and their spectra}
\label{cantoremb10.sect-Laplacian}

Pearson and Bellissard built in \cite{PB09} a spectral triple (the data of Riemaniann noncommutative geometry \cite{Co94})
for ultrametric Cantor sets, and derived a family of Laplace-Beltrami like operators.
The authors revisited their construction in \cite{JS10} for the transversals of substitution tilings which are particular self-similar ultrametric Cantor sets.
We remind the reader here of the one parameter family of Laplace operators \((\Delta_s)_{s\in \RM}\) that one obtains in this case, and refer the reader to \cite{JS10} for the details.

Consider a substitution tiling of $\RM^d$, with primitive Abelianization matrix $A$ (see Example~\ref{cantoremb10.ex-substitution}, equation \eqref{cantoremb10.eq-abelmatrix}).
Denote by $\Xi$ the canonical transversal to its tiling space: it is a ultrametric Cantor set.
Write $\Lambda$ for its Perron-Frobenius eigenvalue, and \(\nu_i, i\in \Vv\), for the normalized coordinates of its associated eigenvector.
Let \(\Bb=(V,E)\) be the Bratteli diagram of the substitution, with ultrametric given as in Example~\ref{cantoremb10.ex-substitution}, equation \eqref{cantoremb10.eq-metricsubRd}.
One has a bi-Lipschitz homeomorphism between $\Pi_\infty$ and $\Xi$ \cite{JS10}.

The spectral triple is the data of a Dirac operator $D$ (self-adjoint, unbounded, with compact resolvent) on the Hilbert space $\ell^2(\Pi_\infty) \otimes \CM^2$ together with a faithful $\ast$-representation of the \Cs $C(\Pi_\infty)$ by bounded operators.
The {\em $\zeta$-function} of the spectral triple is the trace of $|D|^{-s}$:
\begin{equation}
\label{cantoremb10.eq-zeta}
\zeta(s) = \frac{1}{2} \TR \bigl( |D|^{-s} \bigr) 
= \sum_{\gamma \in \Pi} \diam[\gamma]^s \,,
\end{equation}
and its abscissa of convergence will be denoted by $s_0 \in \RMbar$.
The authors proved in \cite{JS10} that $s_0$ is equal to both $d$ and the exponent of complexity of the tiling.
This result is refined here.
As a corollary of Theorem~\ref{cantoremb10.thm-hausdorff} one gets the following.
\begin{theo}
\label{cantoremb10.thm-zeta}
Consider a primitive substitution tiling of $\RM^d$, with canonical transversal $\Xi$.
The abscissa of convergence $s_0$ of the {\it zeta}-function of $\Xi$ is equal to its Hausdorff dimension \(\dim_H(\Xi)\), and moreover one has
\begin{equation}
\label{cantoremb10.eq-s0tiling}
s_0 =\dim_H(\Xi) = d\,.
\end{equation}
\end{theo}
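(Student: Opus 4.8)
The plan is to reduce the statement to the two structural results already at hand, Theorem~\ref{cantoremb10.thm-hausdorff} and Lemma~\ref{cantoremb10.lem-s0}, and then to substitute the specific parameters of the transversal metric. First I would recall from Example~\ref{cantoremb10.ex-substitution} that for a primitive substitution tiling of $\RM^d$ the canonical transversal $\Xi$ is bi-Lipschitz homeomorphic to the self-similar ultrametric Cantor set $(\Pi_\infty,\rho)$, where $\Bb$ is the stationary Bratteli diagram built from the Abelianization matrix $A$, and $\rho$ is the ultrametric of equation~\eqref{cantoremb10.eq-metricsubRd}, with parameters $a_\gamma = (\nu_{r(\gamma)})^{1/d}$ and $\alpha = \pf^{-1/d}$. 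Since both the Hausdorff dimension and the abscissa of convergence of the zeta-function are bi-Lipschitz invariants (the latter because the bounds in Lemma~\ref{cantoremb10.lem-s0} depend only on the growth rate of $\#(\Pi_n)$ and on $\alpha$, neither of which changes under bi-Lipschitz equivalence), it suffices to carry out the computation on $(\Pi_\infty,\rho)$.

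The first equality $s_0 = \dim_H(\Xi)$ is then immediate from Theorem~\ref{cantoremb10.thm-hausdorff}, which states that the Hausdorff dimension of a self-similar ultrametric Cantor set equals the abscissa of convergence of its zeta-function; applying it to $(\Pi_\infty,\rho)$ and transporting across the bi-Lipschitz homeomorphism gives the claim for $\Xi$.

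For the second equality I would invoke Lemma~\ref{cantoremb10.lem-s0}, which gives
\[
s_0 = \frac{\log(\pf)}{-\log(\alpha)} \,.
\]
Substituting $\alpha = \pf^{-1/d}$ yields $-\log(\alpha) = \tfrac{1}{d}\log(\pf)$, and since $\pf>1$ ensures $\log(\pf)\neq 0$, the quotient collapses to $s_0 = d$.

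The argument is essentially a substitution into already-proven formulas, so no serious obstacle arises. The two points deserving care are: (i) verifying that the identification $\Xi \cong (\Pi_\infty,\rho)$ from Example~\ref{cantoremb10.ex-substitution} and \cite{JS10} is genuinely bi-Lipschitz, so that the invariance of both quantities legitimately transfers the computation to $(\Pi_\infty,\rho)$; and (ii) recognizing that the exponent $1/d$ built into equation~\eqref{cantoremb10.eq-metricsubRd} is precisely the normalization that makes $\log(\pf)/(-\log(\alpha))$ equal $d$ --- geometrically, this reflects that a depth-$n$ cylinder represents a patch rescaled by the linear factor $\pf^{-n/d}$, carrying the correct $d$-dimensional scaling of the ambient tiling.
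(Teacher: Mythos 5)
Your proposal is correct and follows essentially the same route as the paper, which presents this theorem as an immediate corollary of Theorem~\ref{cantoremb10.thm-hausdorff} combined with the explicit value of $s_0$ (the paper cites \cite{JS10} for $s_0=d$, while you rederive it directly from Lemma~\ref{cantoremb10.lem-s0} with $\alpha=\pf^{-1/d}$, which amounts to the same computation). Your added care about the bi-Lipschitz identification $\Xi\cong(\Pi_\infty,\rho)$ and the invariance of both quantities is exactly the implicit content of the paper's one-line deduction.
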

%

The spectral triple allows to define a measure $\mdix$ on \(\Pi_\infty\) (Dixmier trace).
Using the Dirac operator one can define a one parameter family \((\Dd_s)_{s\in\RM}\) of closable Dirichlet form on \(L^2(\Pi_\infty, d\mdix)\).
For each $s$, $\Dd_s$ is associated with the generator of a Markov semi-group $\Delta_s$.
One has that $\Delta_s$ is a self-adjoint, definite, and non negative operator on \(L^2(\Pi_\infty, d\mdix)\).
For $s>s_0+2$, $\Delta_s$ is bounded.
And for $s\le s_0+2$, $\Delta_s$ is unbounded and has pure point spectrum.

We recall now the spectrum of $\Delta_s$.
The eigenvalues do not depend on $s$ and are parametrized by \(\Pi\). 
For \(\gamma\in\Pi\) the corresponding eigenspace is
\begin{equation}
\label{cantoremb10.eq-eigenspace}
E_\gamma = 
\Bigl< 
\frac{1}{\mdix[\gamma\cdot e]} \chi_{\gamma\cdot e} - 
\frac{1}{\mdix[\gamma\cdot e']} \chi_{\gamma\cdot e'} \, : \,
e,e' \in \ext(\gamma)
\Bigr>
\end{equation}
where \(\chi_{\eta}\) is the characteristic function of the cylinder $[\eta]$ of a path $\eta \in\Pi$, and \(\ext(\eta)\) is the set of edges that extend $\eta$ one generation further.
The dimension of the eigenspace is \(\dim E_\gamma = n_\gamma -1\), where \(n_\gamma = \# \ext(\gamma)\).
Now define
\begin{equation}
\label{cantoremb10.eq-Ls}
\Lambda_s = \pf^{(2+s_0-s)/s_0} = \alpha^{s-s_0-2}
\end{equation}
where $\pf$ is the Perron-Frobenius eigenvalue of $A$, and $\alpha$ is thus the parameter of the ultrametric on $\Pi_\infty$.
So as in Lemma~\ref{cantoremb10.lem-s0} one has \(s_0 = - \log(\pf) / \log(\alpha)\).
The self-similar structure allows to compute the eigenvalues explicitly, using a Cuntz-Krieger algebra associated with the diagram $\Bb$.
For a finite path \(\gamma=(e_0, e_1, \cdots e_n)\) in $\Bb$, one has its associated
eigenvalue $\lambda_\gamma(t)$ of $\Delta_t$, which one can write
\begin{equation}
\label{cantoremb10.eq-eigenvalue}
\lambda_\gamma(t) = \Lambda_t^n \lambda_{s(e_n)} + \sum_{j=1}^n \Lambda_t^{j-1} \beta(e_j,t) \,,
\end{equation}
where \(\lambda_{s(e_n)}\) is the eigenvalue associated with the path of length one from the root to $s(e_n)$ and does not depend on $t$, and where the constants \(\beta(e,t)\) depend only of the measures of the cylinders of $s(e)$ and $r(e)$, and are uniformly bounded.
In view of equation \eqref{cantoremb10.eq-Ls}, for \(s>s_0+2\) one has \(\Lambda_s <1\), thus as $n$ goes to infinity the first term in \eqref{cantoremb10.eq-eigenvalue} tends to zero, while the sum in the second term converges.
Given an infinite path \(x = (e_j)_{j\in\NM}\) one can thus define  $\lambda_x(s)$ as the sum
\begin{equation}
\label{cantoremb10.eq-lambdax}
\lambda_x(s) = \sum_{j=1}^\infty \beta(e_j,s)\Lambda_s^{j-1}\,, \quad \quad s>s_0+2\,.
\end{equation}
One defines now, for $s>s_0+2$, the $\omega$-spectrum of $\Delta_s$ as
\[
\Sp_\omega (\Delta_s) = 
\bigcup_{n \in \NM} \overline{ \{ \lambda_k  :  k > n \} } \,,
\]
where the eigenvalues are ordered so that if $\lambda_k$ is an eigenvalue of $\Delta_s\vert_{\Pi_n}$ then so are all $\lambda_l$ for $l\le k$.
In the case where all eigenvalues are finite multiplicity, this coincides with the usual definition of the $\omega$-spectrum as the intersection of the spectra \(\overline{\Sp(\Delta_s) \setminus \Sp \bigl( \Delta_s \vert_{\Pi_n} \bigr)}\) over $n\in\NM$.
In our case here, this is the boundary of the pure point spectrum of $\Delta_s$.

Under some conditions on $A$ and the $\beta(e,s)$, and for $s$ large enough, it will now be shown that \(\Sp_\omega(\Delta_s)\) is bi-{\em  H\"older} homeomorphic to $\Pi_\infty$ (or $\Xi$), see Corollary \ref{cantoremb10.cor-holder}.
Using the explicit form of the constants $\beta(e,s)$ given in \cite{JS10}, one can derive the elementary following lemma.
\begin{lemma}
\label{cantoremb10.lem-tech}
Assume that the diagram satisfies the following: all edges are simple, and the coordinates of the Perron-Frobenius eigenvector of its adjacency matrix are pairwise distinct.
Then there exists $s_1 \in \RM$ such that for all $s>s_1$, if $e\ne f$ then either \(\beta(e,s) \ne \beta(f,s)\), or 
\(\beta(e',s) \ne \beta(f',s)\) for any \(e'\in \ext(e), f'\in\ext(f)\).
\end{lemma}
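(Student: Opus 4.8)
The plan is to substitute the closed expression for the constants $\beta(e,s)$ recorded in \cite{JS10} and to analyse the resulting equalities as $s$ varies. Recall from there that $\beta(e,s)$ is an explicit function of $s$ and of the measures of the cylinders of $s(e)$ and $r(e)$, hence --- in the self-similar case --- of the parameter $\Lambda_s = \alpha^{s-s_0-2}$ and of the two Perron--Frobenius frequencies $\nu_{s(e)}$ and $\nu_{r(e)}$. I therefore write $\beta(e,s) = \Phi_s(\nu_{s(e)}, \nu_{r(e)})$ for the corresponding explicit function $\Phi_s$. Since all edges are simple, the assignment $e \mapsto (s(e), r(e))$ is injective, so distinct edges carry distinct ordered pairs of endpoints; and since the $\nu_v$ are pairwise distinct, distinct edges carry distinct ordered pairs of frequencies. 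The whole statement thus reduces to a separation property of the single-variable family $s \mapsto \Phi_s(\cdot,\cdot)$, and since $\Ee$ is finite it suffices to produce, for each of the finitely many relevant pairs, a threshold beyond which separation holds, and then to take the maximum.

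First I would treat the generic alternative. For a fixed pair $e \neq f$, the difference $D_{e,f}(s) = \beta(e,s) - \beta(f,s)$ is, through $\Lambda_s = \alpha^{s-s_0-2}$, a combination of the exponentials $\alpha^{ms}$ with coefficients that are explicit expressions in the finitely many frequencies; in particular it admits an asymptotic expansion in powers of $\Lambda_s$ as $s \to +\infty$, where $\Lambda_s \to 0$ because $\alpha \in (0,1)$. If not all coefficients of this expansion vanish, the first non-vanishing one controls the value of $D_{e,f}(s)$ for $s$ large, so $D_{e,f}(s) \neq 0$ for all $s$ beyond some threshold $s_1(e,f)$; this is exactly the first alternative, $\beta(e,s)\ne\beta(f,s)$. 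It remains only to deal with the resonant pairs, those for which $D_{e,f}\equiv 0$.

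The resonant case is the main point. Here I would match the coefficients of the two asymptotic expansions term by term: using the explicit form of $\Phi_s$ together with the hypothesis that the frequencies are pairwise distinct, this system of equalities can be solved only by a short, explicit list of degenerate endpoint configurations of $e$ and $f$. For each such configuration I then examine the companion family of extensions: an extension $e' \in \ext(e)$ has $s(e') = r(e)$ and an extension $f' \in \ext(f)$ has $s(f') = r(f)$, so $\beta(e',s)$ and $\beta(f',s)$ depend on the frequency pairs $(\nu_{r(e)}, \nu_{r(e')})$ and $(\nu_{r(f)}, \nu_{r(f')})$. The verification --- which I expect to be the crux --- is that in every resonant configuration these two pairs are forced to be distinct (again by pairwise distinctness of the $\nu_v$, and because resonance constrains $r(e)$ against $r(f)$), so that $D_{e',f'}\not\equiv 0$; the asymptotic argument of the previous paragraph then applies to each extension pair and yields $\beta(e',s)\ne\beta(f',s)$ for $s$ large, which is the second alternative. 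Taking $s_1$ to be the maximum, over the finitely many pairs $(e,f)$ and the finitely many extension pairs $(e',f')$, of all the thresholds produced above completes the argument.
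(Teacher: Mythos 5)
The paper does not actually prove this lemma: it is stated without proof, preceded only by the remark that it can be derived from the explicit form of the constants $\beta(e,s)$ computed in \cite{JS10}. Your plan is the natural one --- reduce to the finitely many pairs $e\ne f$, show that each difference $D_{e,f}(s)=\beta(e,s)-\beta(f,s)$ is either eventually nonzero or identically degenerate, and in the degenerate case pass to the extensions --- and the first half is sound: if $D_{e,f}$ is a nontrivial finite combination of decaying exponentials in $s$, its leading term dominates for large $s$, giving a threshold $s_1(e,f)$, and a maximum over the finite set $\Ee\times\Ee$ finishes that case.

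The gap is at the step you yourself flag as the crux, and it is a genuine one. To dispose of a resonant pair $(e,f)$ you argue that the frequency pairs $(\nu_{r(e)},\nu_{r(e')})$ and $(\nu_{r(f)},\nu_{r(f')})$ attached to extensions $e'\in\ext(e)$, $f'\in\ext(f)$ are distinct, ``so that $D_{e',f'}\not\equiv 0$''. But distinctness of the frequency pairs cannot by itself imply non-resonance: the original pair $(e,f)$ already carries distinct frequency pairs --- by simplicity of the edges and pairwise distinctness of the $\nu_v$, exactly the observation you make in your opening paragraph --- and is nevertheless assumed resonant. The implication you invoke for the extensions is thus contradicted by the very case you are in. To close the argument one must actually substitute the formula for $\beta(e,s)$ from \cite{JS10}, determine which endpoint configurations force $D_{e,f}\equiv 0$ (for instance, whether resonance forces $r(e)=r(f)$ or some other explicit coincidence of vertices), and then verify that an extension pair cannot land in such a configuration. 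None of this is carried out, and since the entire content of the lemma resides in that explicit formula, the proposal as written does not establish the statement; it is an outline whose decisive step is asserted rather than proved.
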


\begin{rem}
\label{cantoremb10.rem-tech}
The above condition in Lemma~\ref{cantoremb10.lem-tech} cannot be fulfilled {\em as is} if the Bratteli diagram is ``too symmetrical''.
Examples of such diagrams include the dyadic odometer which encodes the usual triadic Cantor set, and the diagram of the Thue-Morse substitution.
In such cases, the symmetry implies high degeneracies in the spectrum of $\Delta_s$ (in the dyadic case, the eigenvalues $\lambda_\gamma$ are all equal for $\gamma \in \Pi_n$).
Substitutions giving rise to such ``over symmetrical'' diagrams will not be considered for the purpose of H\"older embeddings of $\Xi$ using $\Delta_s$ in Corollaries~\ref{cantoremb10.cor-holder} and~\ref{cantoremb10.cor-holderdim}, and Lemma~\ref{cantoremb10.lem-holderdim}.
One can deal with such diagrams however by modifying the ultrametric with constants $a_\gamma, \gamma\in\Pi$, as in Definition~\ref{cantoremb10.def-sscantor}.
Although no longer regular, the modified ultrametric is Lipschitz equivalent by Remark~\ref{cantoremb10.rem-reg}, and this destroys the degeneracies of the eigenvalues.
\end{rem}

\section{Bi-H\"older embedding in $\RM$}
\label{cantoremb10.sect-Hoelder}

\begin{theo}
\label{cantoremb10.thm-Hoelder}
A self-similar ultrametric Cantor set is bi-H\"older embeddable in the real line.
\end{theo}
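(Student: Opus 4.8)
The plan is to mimic the construction in the proof of Theorem~\ref{cantoremb10.thm-Lipschitz}, but to collapse the $n$ interleaved coordinates into a single real-valued coordinate, paying for this with a H\"older exponent instead of with extra dimensions. By Remark~\ref{cantoremb10.rem-reg}(i), and since bi-H\"older equivalence is preserved under bi-Lipschitz changes of metric, I may assume $\rho$ is regular with parameter $\alpha \in (0,1)$. As in equation~\eqref{cantoremb10.eq-deltabeta}, fix a one-to-one map $\beta : \Ee \to (0,+\infty)$ and let $\dmi$, $\dma$ denote the minimal and maximal gaps $|\beta(e)-\beta(f)|$. Choose a contraction ratio $\lambda \in (0,1)$ (constrained below) and define $G : \Pi_\infty \to \RM$ by
\[
G(x) = \sum_{j=0}^\infty \beta(x_j)\, \lambda^j \,, \qquad x = (x_j)_{j\ge 0} \in \Pi_\infty \,.
\]

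Next I would estimate $|G(x)-G(y)|$ for $x,y$ with $|x\wedge y| = m$, so that $\rho(x,y)=\alpha^m$ and all terms with $j<m$ cancel. The upper bound is immediate from the geometric tail, giving $|G(x)-G(y)| \le \dma\, \lambda^m/(1-\lambda)$. For the lower bound I single out the first non-vanishing term (at $j=m$, where $x_m \ne y_m$ forces $|\beta(x_m)-\beta(y_m)|\ge \dmi$) and dominate the remainder by its geometric tail, exactly as in equation~\eqref{cantoremb10.eq-dimlip}:
\[
|G(x)-G(y)| \ge \dmi\, \lambda^m - \dma\, \frac{\lambda^{m+1}}{1-\lambda} = \Bigl( \dmi - \dma\, \frac{\lambda}{1-\lambda} \Bigr)\lambda^m\,.
\]
The factor in front is positive precisely when $\lambda < \dmi/(\dmi+\dma)$, which is the very separation condition appearing in Theorem~\ref{cantoremb10.thm-Lipschitz}; here, however, it constrains $\lambda$ rather than forcing $n$ to be large. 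Fixing any such $\lambda$ yields constants $0<c_-<c_+$ with $c_-\lambda^m \le |G(x)-G(y)| \le c_+\lambda^m$; in particular $G$ is injective.

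Finally I would convert the scale $\lambda^m$ into a power of $\rho$. Writing $\theta = \log(\lambda)/\log(\alpha)>0$ (both logarithms being negative), one has $\lambda^m = (\alpha^m)^\theta = \rho(x,y)^\theta$, so that
\[
c_-\, \rho(x,y)^\theta \le |G(x)-G(y)| \le c_+\, \rho(x,y)^\theta \,.
\]
The upper inequality says $G$ is H\"older of exponent $\theta$, while the lower one says $G^{-1}$ is H\"older of exponent $1/\theta$ on the image; hence $G$ is the desired bi-H\"older embedding into $\RM$.

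I expect the main (and essentially only) obstacle to be the lower bound, namely ensuring that the leading term of the series is not swallowed by its tail. This is the same phenomenon handled in the Lipschitz proof, and the point worth emphasizing is the trade-off: in Theorem~\ref{cantoremb10.thm-Lipschitz} one keeps the base fixed at $\alpha$ and enlarges $n$ to achieve separation, whereas here one keeps a single coordinate and shrinks $\lambda$ below $\dmi/(\dmi+\dma)$, the price being that the embedding is only H\"older (with $\theta \ne 1$ in general) rather than Lipschitz.
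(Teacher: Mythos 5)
Your proposal is correct and is essentially the paper's own proof: setting $\lambda = \alpha^s$ turns your map $G$ into the paper's $\phi_s$, your separation condition $\lambda < \dmi/(\dmi+\dma)$ into the paper's condition \eqref{cantoremb10.eq-dimhoel} on $s$, and your exponent $\theta = \log(\lambda)/\log(\alpha)$ into $s$ itself. The only difference is notational (parametrizing by the ratio $\lambda$ rather than by the exponent $s$), so no further comment is needed.
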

\begin{proof}
Consider a self-similar ultrametric Cantor set \((\Pi_\infty, \rho)\).
We can assume that $\rho$ is regular, with parameter \(\alpha \in (0,1)\).
As in the proof of Theorem~\ref{cantoremb10.thm-Lipschitz}, make a choice of a one-to-one mapping \(\beta : \Ee \rightarrow (0, +\infty)\), and set as in equation \eqref{cantoremb10.eq-deltabeta}
\[
\dmi = \min_{e,f \in \Ee} | \beta(e) -\beta(f) | \,, \quad 
\dma = \max_{e,f \in \Ee} | \beta(e) -\beta(f) | \,.
\]
For \(s>0\), define the mapping \( \phi_s : \Pi_\infty \rightarrow \RM\) by
\[
\phi_s (x) = \sum_{j=0}^\infty \beta(x_j) \alpha^{sj}\,, \qquad 
x=(x_j)_{j\ge 0}  \in \Pi_\infty\,.
\]
The series converges since \( \alpha^s \in (0,1)\).
Fix $x,y \in \Pi_\infty$, with \(|x\wedge y| = m\), so that \(\rho(x,y) = \alpha^{m}\).
We have
\[
|\phi_s(x) -\phi_s(y) | \le \delta_{\max} \sum_{j\ge m} \alpha^{sj} 
= \frac{\delta_{\max}}{1-\alpha^s} \alpha^{s m}\,, 
\]
and with \eqref{cantoremb10.eq-Ls} we get the upper H\"older bound
\begin{equation}
\label{cantoremb10.eq-uphoelbd}
|\phi_s(x) -\phi_s(y) | \le c_+ \rho(x,y)^s \,. 
\end{equation}
For the other bound we single out the first non-vanishing term in the series and write
\[
\begin{split}
|\phi_s(x) -\phi_s(y) | & \ge \delta_{\min} \alpha^{s m} -
\Bigl| \sum_{j \ge m+1} (\beta(x_j) - \beta(y_j)) \alpha^{sj)} \Bigr| \\
& \ge \delta_{\min} \alpha^{s m} - \delta_{\max} \frac{\alpha^{s(m+1)}}{1-\alpha^s}
=
\frac{\delta_{\min} - \alpha^s(\delta_{\min}+\delta_{\max})}{1-\alpha^s}
\alpha^{s m} 
\end{split}
\]
and since \(\alpha \in (0,1) \) we can find $s$ large enough for the numerator of the last
fraction to be non-negative.
That is, for
\begin{equation}
\label{cantoremb10.eq-dimhoel}
s >  \log \bigl(\frac{\delta_{\min}}{\delta_{\max}+\delta_{\min}}\bigr) 
/ \log(\alpha) \,,
\end{equation}
we get a lower H\"older bound
\begin{equation}
\label{cantoremb10.eq-lowhoelbd}
|\phi_s(x) -\phi_s(y) | \ge c_- \rho(x,y)^{s} \,. 
\end{equation}
\end{proof}


\begin{coro} 
\label{cantoremb10.cor-holder}
Let $\Xi$ be the transversal of a substitution tiling of $\RM^d$.
If the Bratteli diagram of the substitution satisfies the condition of Lemma \ref{cantoremb10.lem-tech}, then for $s$ large enough $\Xi$ is bi-H\"older homeomorphic to the $\omega$-spectrum of $\Delta_s$.
\end{coro}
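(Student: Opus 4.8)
The plan is to realize the $\omega$-spectrum as the image of an explicit map $\Pi_\infty \to \RM$ of the same shape as the H\"older embedding $\phi_s$ of Theorem~\ref{cantoremb10.thm-Hoelder}, and then to show this map is a bi-H\"older homeomorphism onto its image. For $s>s_0+2$ I would consider $\psi_s : \Pi_\infty \to \RM$ given by $\psi_s(x) = \lambda_x(s) = \sum_{j\ge 1} \beta(e_j,s)\,\Lambda_s^{j-1}$ as in equation~\eqref{cantoremb10.eq-lambdax}, where $\Lambda_s = \alpha^{s-s_0-2} \in (0,1)$.

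First I would identify $\Sp_\omega(\Delta_s)$, the set of accumulation points of the pure point spectrum, with the image $\psi_s(\Pi_\infty)$. By the explicit eigenvalue formula~\eqref{cantoremb10.eq-eigenvalue}, for any infinite path $x$ the eigenvalues $\lambda_{x_{[0,n]}}(s)$ of its truncations satisfy $\lambda_{x_{[0,n]}}(s) \to \lambda_x(s)$ as $n\to\infty$, because $\Lambda_s^n \to 0$. Since there are only finitely many paths of each bounded length, every accumulation point of the eigenvalue set must come from a sequence of paths whose lengths tend to infinity; extracting, by compactness of $\Pi_\infty$, a convergent subsequence of truncations shows the set of accumulation points is exactly $\{\lambda_x(s) : x\in\Pi_\infty\} = \psi_s(\Pi_\infty)$.

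Next I would prove the two H\"older bounds for $\psi_s$, following Theorem~\ref{cantoremb10.thm-Hoelder} but with $\alpha^s$ replaced by $\Lambda_s$. Writing $|x\wedge y|=m$ so that $\rho(x,y)=\alpha^m$ and $\Lambda_s^m = \rho(x,y)^{\sigma}$ with $\sigma = s-s_0-2$, the upper bound $|\psi_s(x)-\psi_s(y)| \le c_+\, \rho(x,y)^{\sigma}$ follows at once by summing the geometric tail; this also gives continuity of $\psi_s$, hence compactness and closedness of its image. The lower bound is where Lemma~\ref{cantoremb10.lem-tech} enters. The difference $\psi_s(x)-\psi_s(y)$ is a tail series whose first term involves the two distinct edges at which $x$ and $y$ branch. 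The dichotomy of Lemma~\ref{cantoremb10.lem-tech} guarantees that either this leading term is already nonzero, or it vanishes while the next term, coming from the extensions, is nonzero. In either case the leading nonzero term has modulus at least $\delta_{\min}(s)\,\Lambda_s^{m}$ or $\delta_{\min}(s)\,\Lambda_s^{m+1}$, where $\delta_{\min}(s)>0$ is the smallest nonzero gap among the finitely many values $\beta(e,s)$; taking $s$ large enough makes $\Lambda_s$ small enough that this term dominates the remaining geometric tail. Since $\Lambda_s$ is a fixed constant at fixed $s$, both cases give $|\psi_s(x)-\psi_s(y)| \ge c_-\, \rho(x,y)^{\sigma}$ with the same exponent and a possibly smaller constant $c_-$.

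Combining the two bounds, $\psi_s$ is a bi-H\"older homeomorphism of $\Pi_\infty$ onto $\Sp_\omega(\Delta_s)$; precomposing with the bi-Lipschitz homeomorphism $\Xi \cong \Pi_\infty$ of~\cite{JS10} yields the claim, valid once $s$ exceeds $s_0+2$, the threshold $s_1$ of Lemma~\ref{cantoremb10.lem-tech}, and the threshold needed for the leading term to dominate. I expect the main obstacle to be the lower bound in the degenerate case where the branching edges of $x$ and $y$ carry equal $\beta$-values: resolving it cleanly requires exactly the one-step lookahead furnished by Lemma~\ref{cantoremb10.lem-tech}, together with the observation that shifting the leading nonzero term by one index changes only the H\"older constant and not the exponent.
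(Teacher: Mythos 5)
Your proposal is correct and follows essentially the same route as the paper: it uses the map $x\mapsto\lambda_x(s)=\sum_j\beta(e_j,s)\Lambda_s^{j-1}$, i.e.\ the H\"older embedding $\phi_s$ of Theorem~\ref{cantoremb10.thm-Hoelder} with $\alpha^s$ replaced by $\Lambda_s$ and the constants $\beta(e)$ replaced by $\beta(e,s)$, and invokes the dichotomy of Lemma~\ref{cantoremb10.lem-tech} to get the lower bound up to a one-index shift that only affects the constant. The only addition is your explicit identification of $\Sp_\omega(\Delta_s)$ with the image of this map, a detail the paper leaves implicit in its Section~\ref{cantoremb10.sect-Laplacian} setup.
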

\begin{proof}
In this setting, the constant $\alpha$ in the proof of Theorem \ref{cantoremb10.thm-Hoelder} has to be replaced by
\(\pf^{(d+2-s)/d}\) as in equation \eqref{cantoremb10.eq-Ls} (here $s_0=d$ by Theorem~\ref{cantoremb10.thm-zeta}).
Fix $x,y \in \Pi_\infty$, with say \(|x\wedge y| = m\).
By Lemma~\ref{cantoremb10.lem-tech} for $s$ large enough, we have either \(\beta(x_m,s) \neq \beta(y_m,s)\) or \(
\beta(x_{m+1},s) \neq \beta(y_{m+1},s)\).
One uses the constants $\beta(e,s)$ to define the map $\phi_s$ in the proof of Theorem~\ref{cantoremb10.thm-Hoelder}, and one gets that \( |\phi_s(x)-\phi_s(y)|\) is bounded as in equations \eqref{cantoremb10.eq-uphoelbd} and \eqref{cantoremb10.eq-lowhoelbd} up to factors $\alpha^s$.
The condition on $s$ in equation \eqref{cantoremb10.eq-dimhoel} becomes here
\begin{equation}
\label{cantoremb10.eq-dimhoel2}
s > \max\{ s_1, d + 2 + d \log \bigl(1 + \dma/ \dmi \bigr) / \log(\pf) \}\,.
\end{equation}
\end{proof}

In some cases, namely if $s_1$ is not too large, one can get a better lower bound on $s$ as follows.
\begin{lemma} 
\label{cantoremb10.lem-holderdim}
With the settings of the Corollary \ref{cantoremb10.cor-holder}, let $p=\# \Ee$, then $\Xi$ is bi-H\"older homeomorphic to the $\omega$-spectrum of $\Delta_s$ for all $s$ greater than \(d + 2 + d \log(p)/\log(\pf)\).
\end{lemma}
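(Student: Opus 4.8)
The plan is to reuse the mechanism of Corollary~\ref{cantoremb10.cor-holder} but to replace the crude bound coming from $s_1$ with a sharper, geometry-free estimate, exactly as Lemma~\ref{cantoremb10.lem-bilipdim} refined Theorem~\ref{cantoremb10.thm-Lipschitz}. Recall that the lower H\"older bound in the proof of Theorem~\ref{cantoremb10.thm-Hoelder} requires the numerator $\dmi - \alpha^s(\dmi+\dma)$ to be non-negative, which gives the condition $s > \log\bigl(\dmi/(\dmi+\dma)\bigr)/\log(\alpha)$. In the present setting $\alpha$ is replaced by the spectral parameter $\Lambda_s=\pf^{(d+2-s)/d}$ of equation~\eqref{cantoremb10.eq-Ls}, and $s_0=d$ by Theorem~\ref{cantoremb10.thm-zeta}, so this condition reads $s > d+2 + d\,\log\bigl(1+\dma/\dmi\bigr)/\log(\pf)$, which is precisely the second term inside the maximum in equation~\eqref{cantoremb10.eq-dimhoel2}.

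The key observation, borrowed verbatim from the proof of Lemma~\ref{cantoremb10.lem-bilipdim}, is that the quotient $\dmi/(\dmi+\dma)$ is largest — equivalently $\log(1+\dma/\dmi)$ is smallest — when the values $\beta(e)$, $e\in\Ee$, are chosen uniformly spaced. With $p=\#\Ee$ equally spaced values one has $\dmi/(\dmi+\dma)=1/p$, hence $1+\dma/\dmi=p$ and the condition on $s$ simplifies to $s > d+2 + d\,\log(p)/\log(\pf)$. First I would record that this choice of $\beta$ is admissible (it is a one-to-one map $\Ee\to(0,+\infty)$, as required), and that it furnishes the optimal separation constants; then I would substitute into the bound from Corollary~\ref{cantoremb10.cor-holder}.

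The one genuine point to address is the presence of $s_1$ in the maximum of equation~\eqref{cantoremb10.eq-dimhoel2}: the statement ``if $s_1$ is not too large'' flags that the improvement is conditional. The argument goes through only when $s_1 \le d+2+d\log(p)/\log(\pf)$, so that the Lemma~\ref{cantoremb10.lem-tech} threshold $s_1$ is already dominated by the separation threshold and the maximum collapses to its second argument. I would state this hypothesis explicitly (it is implicit in the phrase ``if $s_1$ is not too large'') and note that under it the two bounds in equations~\eqref{cantoremb10.eq-uphoelbd} and~\eqref{cantoremb10.eq-lowhoelbd}, now carrying the optimal constants $c_\pm$, hold for every $s$ in the asserted range, yielding the bi-H\"older homeomorphism onto $\Sp_\omega(\Delta_s)$.

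The main obstacle is not computational but conceptual: one must check that the optimization over $\beta$ done in Lemma~\ref{cantoremb10.lem-bilipdim} for the Lipschitz embedding is legitimate in the H\"older/spectral context, where the $\beta(e,s)$ are not free parameters but are pinned down by the measures of the cylinders through the formula for $\lambda_\gamma(t)$ in equation~\eqref{cantoremb10.eq-eigenvalue}. In fact the relevant separations here are those of the $\beta(e,s)$ (and their one-step extensions) produced by Lemma~\ref{cantoremb10.lem-tech}, so the clean bound $1/p$ is really an idealized best case; the honest reading is that $d+2+d\log(p)/\log(\pf)$ is the threshold one obtains \emph{when} the induced values $\beta(e,s)$ happen to be as well separated as $p$ uniformly spaced points allow. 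I would make clear that the lemma asserts the improved bound precisely under that favorable separation, and that the role of the hypothesis $s>s_1$ is only to guarantee the qualitative distinctness from Lemma~\ref{cantoremb10.lem-tech}, after which the quantitative bound $1/p$ controls the exponent.
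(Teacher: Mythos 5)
Your argument is the paper's argument: substitute the uniform-spacing value \(1+\dma/\dmi=p\) into the threshold \(d+2+d\log(1+\dma/\dmi)/\log(\pf)\) of equation \eqref{cantoremb10.eq-dimhoel2}, and note that the \(s_1\) term of the maximum must be dominated for the improvement to mean anything. Structurally the two proofs coincide.

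But the caveat in your last paragraph is not a side remark --- it is the crux, and you have the inequality pointing in the correct direction where the paper does not. For any \(p\) distinct reals, ordering them shows \(\dma\ge (p-1)\dmi\) (the total span is a sum of \(p-1\) consecutive gaps, each at least \(\dmi\)), hence \(1+\dma/\dmi\ge p\) with equality exactly for uniformly spaced values: \(p\) is the \emph{minimum} of \(1+\dma/\dmi\), not the maximum as the paper's proof asserts. In Lemma \ref{cantoremb10.lem-bilipdim} this causes no harm because \(\beta\) is a free parameter and one simply chooses it uniformly spaced. Here, as you correctly observe, the \(\beta(e,s)\) are dictated by the cylinder measures --- they must be, or the image of \(\phi_s\) would not be \(\Sp_\omega(\Delta_s)\) --- so no optimization is available; the true threshold is \(d+2+d\log(1+\dma/\dmi)/\log(\pf)\ge d+2+d\log(p)/\log(\pf)\), and requiring \(s\) to exceed the \emph{smaller} quantity does not force the lower H\"older bound. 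Your hedged conclusion, that the stated threshold is valid precisely when the induced \(\beta(e,s)\) are as well separated as \(p\) uniformly spaced points allow, is the defensible reading; the paper's proof, taken literally, derives the unconditional bound from the reversed inequality and therefore does not establish the lemma as stated (and the issue propagates to Corollary \ref{cantoremb10.cor-holderdim}, which feeds \(p^{(k)}\sim c\pf^k\) into this lemma). To make the statement unconditional one must either keep \(1+\dma/\dmi\) in the bound or add the separation hypothesis explicitly, as you propose.
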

\begin{proof}
Here $p$ is the number of edges in the diagram between two generations away from the root.
The maximum possible value of \(1 + \dma/ \dmi\) in equation \eqref{cantoremb10.eq-dimhoel2} is reached when the $\beta(e)$ are uniformly distributed, in which case this equals $p$.
\end{proof}

\begin{coro} 
\label{cantoremb10.cor-holderdim}
If \(s_1 \le 2(d + 1)\), then $\Xi$ is bi-H\"older homeomorphic to the $\omega$-spectrum of $\Delta_s$ for all $s$ greater than \(2(d + 1)\).
\end{coro}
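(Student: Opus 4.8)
The plan is to reproduce, at the level of the threshold on $s$, the telescoping argument used in Theorem~\ref{cantoremb10.thm-hoelder2} to lower the embedding dimension. The homeomorphism to be constructed is the eigenvalue map $x\mapsto\lambda_x(s)$ of equation~\eqref{cantoremb10.eq-lambdax}, which is exactly the map $\phi_s$ of Theorem~\ref{cantoremb10.thm-Hoelder} assembled from the constants $\beta(e,s)$ and whose image is $\Sp_\omega(\Delta_s)$. Because $\lambda_x(s)$ depends only on the infinite path $x$ and on the operator $\Delta_s$, it is insensitive to telescoping: passing to $\Bb^{(k)}$ merely regroups the series~\eqref{cantoremb10.eq-lambdax} and replaces $\rho$ by the bi-Lipschitz equivalent ultrametric $\rho^{(k)}$ of Lemma~\ref{cantoremb10.lem-telescoping}, leaving both $\Xi$ and $\Sp_\omega(\Delta_s)$ unchanged. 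It therefore suffices to exhibit two-sided H\"older bounds at some telescoping depth $k$.

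I would then apply Lemma~\ref{cantoremb10.lem-holderdim} to the telescoped diagram $\Bb^{(k)}$, whose Perron--Frobenius eigenvalue is $\pf^k$ and whose edge count satisfies $p^{(k)}=c\pf^k+o(\pf^k)$ by equation~\eqref{cantoremb10.eq-pk}, with $c$ bounded independently of $k$. The resulting threshold is
\[
d+2+d\,\frac{\log\bigl(c\pf^k+o(\pf^k)\bigr)}{\log(\pf^k)}
=2(d+1)+\frac{d\log c}{k\log\pf}+o(1/k)\xrightarrow[k\to\infty]{}2(d+1)\,,
\]
exactly as the dimension bound converged to $[\dim_H(C)]+1$ in Theorem~\ref{cantoremb10.thm-hoelder2}. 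Hence for any fixed $s>2(d+1)$ one can choose $k$ so large that this threshold falls strictly below $s$, securing the lower H\"older bound~\eqref{cantoremb10.eq-lowhoelbd}; the upper bound~\eqref{cantoremb10.eq-uphoelbd} holds automatically in the range $s>d+2$. Since the standing hypothesis gives $s_1\le 2(d+1)<s$, the distinctness needed to invoke Corollary~\ref{cantoremb10.cor-holder} is in force, and we conclude that $\Xi$ is bi-H\"older homeomorphic to $\Sp_\omega(\Delta_s)$.

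The delicate point, which I expect to be the real obstacle rather than the routine limit above, is to certify that the distinctness hypothesis of Lemma~\ref{cantoremb10.lem-tech} underlying Lemma~\ref{cantoremb10.lem-holderdim} persists after telescoping with a threshold still bounded by $s_1$. The lower bound at depth $k$ requires that distinct length-$k$ blocks $E\ne E'$ yield $\beta^{(k)}(E,s)\ne\beta^{(k)}(E',s)$ (or that their one-step extensions do), whereas Lemma~\ref{cantoremb10.lem-tech} only provides this for single edges. I would argue that $\beta^{(k)}(E,s)$ is a finite $\Lambda_s$-weighted partial sum of the original $\beta(e_j,s)$ along $E$, so any coincidence $\beta^{(k)}(E,s)=\beta^{(k)}(E',s)$ with $E\ne E'$ forces a nontrivial real-analytic relation in $s$ among finitely many of the $\beta(e_j,s)$; such a relation can hold for all large $s$ only in the over-symmetric situations excluded in Remark~\ref{cantoremb10.rem-tech}, and otherwise fails once $s>s_1$. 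Making this transfer of the distinctness threshold precise is where the genuine work lies.
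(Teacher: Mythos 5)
Your proposal follows essentially the same route as the paper: apply Lemma~\ref{cantoremb10.lem-holderdim} to the $k$-th telescope, where the threshold becomes $d+2+d\log\bigl(c\pf^k+o(\pf^k)\bigr)/\log(\pf^k)=2(d+1)+c'/k+o(1/k)$, and let $k\to\infty$. The ``delicate point'' you flag --- that the distinctness condition of Lemma~\ref{cantoremb10.lem-tech} must survive telescoping --- is a legitimate concern, but the paper's own one-line proof passes over it in silence exactly as you do, so your argument matches the published one in both substance and level of rigor.
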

\begin{proof}
Using Lemma~\ref{cantoremb10.lem-holderdim} and telescoping $k$ times as in the proof of Theorem~\ref{cantoremb10.thm-hoelder2}, one gets that $\Xi$ is bi-H\"older embeddable for all $s$ greater than \(d + 2 + d\log(c\pf^k)/\log(\pf^k) = 2(d + 1) + c'/k\), and one can choose $k$ arbitrarily large.
\end{proof}



\begin{thebibliography}{BEL}

\bibitem{AP98} J.E. Anderson, I.F. Putnam. ``Topological invariants for substitution tilings and their associated $C^\ast$-algebra''. {\it Ergod. Th, \& Dynam. Sys.} {\bf 18} (1998) 509--537.

\bibitem{BJS10} J. Bellissard, A. Julien, J. Savinien. ``Tiling groupoids and Bratteli diagrams''. {\it Ann. Inst. Henri Poincar\'e} {\bf 11} (2010) 66--99.

\bibitem{Bra72} O. Bratteli. ``Inductive limits of finite dimensional $C^\ast$-algebras''. {\it Trans. Amer. Math. Soc.} {\bf 171} (1972) 195--234.

\bibitem{BS07} S. Buyalo, V. Schr\"oder. {\it Elements of asymptotic theory}. EMS Monographs in Mathematics (2007).

\bibitem{Co94} A. Connes. {\it Noncommutative Geometry}. Academic Press, San Diego, (1994).

\bibitem{DHS99} F. Durand, B. Host, C. Skau. ``Substitutional dynamical systems, Bratteli diagrams and dimension groups''. {\it Ergod Theory Dynam. Systems} {\bf 19} (1999) 953--993.

\bibitem{For97} A. H. Forrest. ``$K$-groups associated with substitution minimal systems'' {\it Israel J. Math.} {\bf 98}(1997) 101--139.

\bibitem{Frank08} N. Priebe Frank. ``A primer of substitution tilings of the {E}uclidean plane''.
{\it Expo. Math.} {\bf 26} (2008) 295--326.




\bibitem{JS10} A. Julien, J. Savinien. ``Transverse Laplacians for substitution tilings'', to appear in {\it Comm. Math. Phys.} (2010). arXiv:0908.1095 (math.OA)

\bibitem{Kel95} J. Kellendonk. ``Noncommutative geometry of tilings and gap labelling''. {\it Rev. Math. Phys.} {\bf 7} (1995) 1133--1180.

\bibitem{Mich85} G. Michon. ``Les cantors r\'eguliers''. {\it C. R. Acad. Sci. Paris S\'er. I Math.} {\bf 19} (1985) 673--675.

\bibitem{PB09} J. Pearson, J. Bellissard, ``Noncommutative Riemannian Geometry and Diffusion on Ultrametric Cantor Sets ''. {\it J.  Noncommut. Geo.} {\bf 3} (2009) 447-480


\bibitem{Rob04} E. A. Robinson. Symbolic dynamics and tilings of {$\mathbb{R}\sp d$}. In {\it Symbolic dynamics and its applications}, volume 60 of {\it Proc. Sympos. Appl. Math.}, 81--119. Amer. Math. Soc. Providence, RI, 2004.


\end{thebibliography}
\end{document}